\newtheorem{theorem}{Theorem}
\newtheorem{lemma}{Lemma}
\author[G. Pant and S.K.Pant]{Garima Pant and Sanjay Kumar Pant}
\address{Garima Pant; department of mathematics, university of delhi, delhi-110007, india.}
\email{garimapant.m@gmail.com}
\address{Sanjay Kumar Pant; department of mathematics, deen dayal upadhyaya college, university of delhi, new delhi-110078, india.}
\email{skpant@ddu.du.ac.in}
\thanks {Research work of the first author is supported by research fellowship from University Grants Commission (UGC), New Delhi, India.}
\title[On Solutions of Certain Non-Linear Differential-Difference Equations]{On Solutions of Certain Non-Linear Differential-Difference Equations}
\subjclass[2010]{ 34M05, 30D35, 39B32}
\keywords {Nevanlinna theory, Entire function, Difference equation,  Differential-difference equation}
\begin{document}
	\maketitle	
	
\begin{abstract}
In this paper, we study about solutions of certain kind of non-linear differential difference equations
$$f^{n}(z)+wf^{n-1}(z)f^{'}(z)+f^{(k)}(z+c)=p_{1}e^{\alpha_{1}z}+p_{2}e^{\alpha_{2}z}$$ and 
$$f^{n}(z)+wf^{n-1}(z)f^{'}(z)+q(z)e^{Q(z)}f(z+c)=p_{1}e^{\alpha_{1}z}+p_{2}e^{\alpha_{2} z},$$
where $n\geq 2$, $k\geq0$ are integers, $w, p_{1}, p_{2}, \alpha_{1}$ $\&$  $\alpha_{2}$ are non-zero constants satisfying $\alpha_{1}$ $\neq$ $\alpha_{2}$, $0\not\equiv q$ is a polynomial and $Q$ is a non-constant polynomial. 
\end{abstract}	

\section{\textbf{Introduction}}
It is assumed that readers are familiar with the standard notations of Nevanlinna theory such as $T(r,f)$, $m(r,f)$, $N(r,f)$, and $n(r,f)$ are called characteristic function of $f$, proximity function of $f$, counting function of $f$ and un-integrated counting function of $f$ respectively, here $f$ denotes a meromorphic function. Recall that for a meromorphic function $f$, Nevanlinna's first main theorem expressed as
$$T\left(r,\frac{1}{f-a}\right)=T(r,f)+O(1),$$
for all $a\in \mathbb{C}$. Here $O(1)$ denotes bounded error term depends on $a$,\cite{ilaine}. Also the quantities which are of growth $o(T(r,f))$ as $r\to \infty$, outside a set of finite linear measure, are denoted
by $S(r, f)$. It means we say that a meromorphic function $h(z)$ is said to be a small function of $f(z)$ if $T(r, h) = S(r, f)$ and converse is also true. Note that the finite sum of $S(r, f)$ quantities is again forms $S(r, f)$. \\
Next the terms order of growth $\rho(f)$, hyper-order of growth $\rho_{2}(f)$ and exponent of convergence $\lambda(f)$ of a meromorphic function $f$ are defined by
$$\rho(f)=\limsup_{r\to\infty}\frac{\log^{+} T(r,f)}{\log r},$$
$$\rho_2(f)=\limsup_{r\to\infty}\frac{\log^{+}\log^{+} T(r,f)}{\log r}$$
and
$$\lambda(f)=\limsup_{r\to\infty}\frac{\log^{+} n(r,1/f)}{\log r},$$
respectively.   In this paper, we use the above notations, definitions and facts in a frequent manner. For the standard results of Nevanlinna theory we refer \cite{ilaine,yanglo,yybook}.\\
A differential-difference polynomial $Q(z,f)$ in $f$ is  defined as a finite sum of products of $f$, derivatives of $f$ and their shifts,
with all the coefficients must be a small function of $f$, here $f$ is a meromorphic function. \\
Many researchers have been studied about the solvability and existence of solutions of certain kind of non-linear differential-difference equations, one can see \cite{chw, cl, li, li1, ly, rx, ww}. \\
In this sequence, Li and Huang studied a certain differential-difference equation
\begin{equation}\label{maineq1}
f^{n}(z)+wf^{n-1}(z)f^{'}(z)+f^{(k)}(z+c)=p_{1}e^{\alpha_{1}z}+p_{2}e^{\alpha_{2}z},
\end{equation}
where $n$ is a natural number, $k\geq 0$ is an integer, $w, p_{1}, p_{2}, \alpha_{1}$ $\&$  $\alpha_{2}$ are non-zero constant satisfying $\alpha_{1}$ $\neq$ $\alpha_{2}$, and they provided the following result:
\begin{theorem}\rm\cite{jh}
Suppose that $n\geq5$, $\alpha_{1}/\alpha_{2}\neq n$ and $\alpha_{2}/\alpha_{1}\neq n$, then equation \eqref{maineq1} has no
transcendental entire solutions.
\end{theorem}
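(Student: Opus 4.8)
\emph{The plan is to argue by contradiction.} Suppose \eqref{maineq1} admits a transcendental entire solution $f$, and put $F:=f^{n-1}(f+wf')$, $G:=f^{(k)}(z+c)$, $P:=p_1e^{\alpha_1z}+p_2e^{\alpha_2z}$, so that $F+G=P$. First I would dispose of the degenerate case $f+wf'\equiv0$: then $f=ce^{-z/w}$ with $c\neq0$, and \eqref{maineq1} reduces to $c(-1/w)^ke^{-c/w}e^{-z/w}=p_1e^{\alpha_1z}+p_2e^{\alpha_2z}$, which is impossible since a single nonzero exponential term is never a sum of two exponential terms with distinct exponents and nonzero coefficients. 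So henceforth $f+wf'\not\equiv0$. Writing $F=f^n(1+wf'/f)$, the lemma on the logarithmic derivative gives $m(r,F)\le nT(r,f)+S(r,f)$, while $nT(r,f)=m(r,f^n)\le m(r,F)+m\bigl(r,\tfrac f{f+wf'}\bigr)$ together with $m\bigl(r,\tfrac f{f+wf'}\bigr)\le\overline N(r,1/f)+S(r,f)\le T(r,f)+S(r,f)$ yields $(n-1)T(r,f)+S(r,f)\le m(r,F)$. Feeding these two bounds into $F=P-G$, together with the standard growth and shift estimates for $G=f^{(k)}(z+c)$ and $\rho(P)=1$, I would conclude $\rho(f)=1$, in fact $T(r,f)\asymp r$.

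Next I would differentiate \eqref{maineq1} and eliminate the exponentials. Applying the operator $\mathcal L[y]:=y''-(\alpha_1+\alpha_2)y'+\alpha_1\alpha_2y$, which annihilates $P$, to $F+G=P$ gives $\mathcal L[F]=-\mathcal L[G]$. A direct computation shows $\alpha_jF-F'=f^{n-2}\mathcal H_j$ with $\mathcal H_j:=\alpha_jf^2+(w\alpha_j-n)ff'-w(n-1)(f')^2-wff''$ ($j=1,2$), and hence $\mathcal L[F]=-f^{n-3}\mathcal K$ with $\mathcal K:=(n-2)f'\mathcal H_2+f(\mathcal H_2'-\alpha_1\mathcal H_2)$, a differential polynomial of degree $3$ in $f$; at a zero of $f$ of multiplicity $m$ the leading coefficient of $\mathcal H_j$ is $-wa^2m(nm-1)\neq0$ and that of $\mathcal K$ is a nonzero multiple of $nm-2\neq0$ (as $n\ge3$). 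Thus
$$f^{n-3}\mathcal K=\mathcal L[G]=f^{(k+2)}(z+c)-(\alpha_1+\alpha_2)f^{(k+1)}(z+c)+\alpha_1\alpha_2f^{(k)}(z+c),$$
so every zero of $f$ of multiplicity $m$ is a zero of $\mathcal L[G]$ of multiplicity $nm-3$; and since $m(r,\mathcal L[G])\le T(r,f)+S(r,f)$ (logarithmic derivative plus shift), I obtain
$$(n-3)N(r,1/f)\le T(r,f)+S(r,f),\qquad (n-3)T(r,f)\le T(r,\mathcal L[G])+T(r,\mathcal K)+O(1).$$

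Then I would treat the degenerate sub-cases. If $\mathcal K\equiv0$, equivalently $\mathcal L[F]\equiv\mathcal L[G]\equiv0$, then $F$ and $G$ are each $\mathbb C$-linear combinations of $e^{\alpha_1z}$ and $e^{\alpha_2z}$; $G\equiv0$ would make $f$ a polynomial and $F\equiv0$ is the excluded case, so $F$ is a combination with at least one nonzero coefficient and therefore has no zero of multiplicity $\ge2$, while a zero of $f$ of multiplicity $m$ is a zero of $F$ of multiplicity $\ge(n-1)m\ge n-1\ge4$; hence $f$ is zero-free, $f=e^{\gamma z+\delta}$ with $\gamma\neq0$, and then $F=(1+w\gamma)e^{n\gamma z+n\delta}$ and $G=\gamma^ke^{\gamma c+\delta}e^{\gamma z}$ are single exponentials, so $F+G=P$ forces $\{n\gamma,\gamma\}=\{\alpha_1,\alpha_2\}$, i.e. $\alpha_1/\alpha_2=n$ or $\alpha_2/\alpha_1=n$, against the hypothesis. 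Similarly $\mathcal H_1\equiv0$ or $\mathcal H_2\equiv0$ is, after dividing by $f^2$ and putting $v=f'/f$, a constant-coefficient Riccati equation $v'=-nv^2+(\alpha_j-n/w)v+\alpha_j/w$, whose only entire solutions correspond to the equilibria $v=\alpha_j/n$ and $v=-1/w$, i.e. $f=ce^{\alpha_jz/n}$ or $f=ce^{-z/w}$; substituting $f=ce^{\alpha_jz/n}$ in \eqref{maineq1} again forces $\alpha_1/\alpha_2=n$ or $\alpha_2/\alpha_1=n$, while $f=ce^{-z/w}$ is excluded. Hence $\mathcal K,\mathcal H_1,\mathcal H_2\not\equiv0$.

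\emph{The hard part will be the generic case} $\mathcal K,\mathcal H_1,\mathcal H_2\not\equiv0$, and especially pushing the threshold all the way down to $n\ge5$. Here one also has the first-order identities $f^{n-2}\mathcal H_j=(\text{nonzero constant})\,e^{\alpha_{3-j}z}+\bigl(f^{(k+1)}(z+c)-\alpha_jf^{(k)}(z+c)\bigr)$, whose right-hand sides satisfy $m(r,\cdot)\le\frac{|\alpha_{3-j}|}{\pi}r+T(r,f)+S(r,f)$ and vanish to order $\ge n-2$ at every zero of $f$; combining these with the two displayed inequalities above, and with a Second Main Theorem estimate for $f$ to bound $\overline N(r,1/f)$ and $\overline N(r,1/(f-a))$ against $T(r,f)$, should force $n<5$. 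I expect precisely this final accounting to be the main obstacle: a crude use of the multiplicity gains $nm-3$ and $nm-2$ only excludes $n$ above a larger bound, and getting the sharp value $n\ge5$ requires exploiting these gains, the growth relation $T(r,f)\asymp r$, and the exponential structure of the right-hand sides simultaneously and efficiently.
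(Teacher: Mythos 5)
Your reduction is on the right track: the identity $f^{n-3}\mathcal K=\mathcal L[G]$ obtained by applying $\mathcal L[y]=y''-(\alpha_1+\alpha_2)y'+\alpha_1\alpha_2 y$ is, up to sign and normalization, exactly the paper's key equation \eqref{keyeq} with $\mathcal K$ playing the role of $\phi$ in \eqref{impeq}. But there is a genuine gap, and you have named it yourself: you cannot close the generic case $\mathcal K\not\equiv 0$. The missing tool is the differential--difference analogue of the Clunie lemma (Lemma~\ref{analogueofclunielemm}, due to Yang and Laine). Since the right-hand side $\mathcal L[G]$ is a differential--difference polynomial of total degree $1$ in $f$, that lemma applied to $f^{n-3}\mathcal K=\mathcal L[G]$ gives $m(r,\mathcal K)=S(r,f)$ as soon as $1\le n-3$, and applied to $f^{n-4}\,(f\mathcal K)=\mathcal L[G]$ gives $m(r,f\mathcal K)=S(r,f)$ as soon as $1\le n-4$; this is precisely where the threshold $n\ge 5$ enters. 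Then, for entire $f$ and $\mathcal K\not\equiv 0$,
$$T(r,f)=m(r,f)\le m(r,f\mathcal K)+m\!\left(r,\frac{1}{\mathcal K}\right)\le S(r,f)+T(r,\mathcal K)+O(1)=S(r,f),$$
an immediate contradiction. Your substitute --- counting zero multiplicities of $f$ inside $\mathcal L[G]$ and comparing characteristic functions --- provably cannot reach $n\ge5$: as you compute, $T(r,\mathcal K)\le 3T(r,f)+S(r,f)$ and $T(r,\mathcal L[G])\le T(r,f)+S(r,f)$ only force $n\le 7$ or so, and no amount of Second Main Theorem bookkeeping on $\overline N(r,1/f)$ is known to bridge that; the whole point of the Clunie-type lemma is that it converts the degree deficit $n-3-1\ge1$ directly into $m(r,f\mathcal K)=S(r,f)$ without any zero counting. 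A proof that omits this lemma (or an equivalent) is not a proof of the stated theorem.

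Two smaller remarks. First, once $\mathcal K\equiv 0$, the paper's route (compare multiplicities $3m$ versus $\le 3m-3$ at a zero of $f$ of order $m\ge2$ in \eqref{impeq1}, conclude $f$ has finitely many zeros, apply Hadamard factorization and then the Borel-type Lemma~\ref{imp} to \eqref{eq}) lands exactly on $f=Ce^{az}$ with $a=\alpha_i$, $na=\alpha_j$, which your hypothesis $\alpha_1/\alpha_2\ne n\ne\alpha_2/\alpha_1$ excludes; your Riccati treatment of $\mathcal H_j\equiv0$ is in the right spirit but is incomplete as stated, because $v=f'/f$ is only meromorphic (poles at zeros of $f$), so ``the only entire solutions are the equilibria'' does not apply --- you need the residue computation (the residue of the non-constant Riccati solution at a pole is $1/n\notin\mathbb Z$, as in the paper's proof of Theorem~\ref{mainth3}) to rule out the non-constant branch. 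Second, your preliminary order estimate implicitly uses the Chiang--Feng shift lemma, which requires $\rho(f)<\infty$; the finite-order reduction should be stated rather than assumed.
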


Motivated by the above result, we prove the following result:
\begin{theorem}\label{mainth1}
Suppose that $n\geq 5$ and $f$ is a transcendental entire solution of finite order of the differential-difference equation  \eqref{maineq1},
 then it must be of $f(z)= C e^{az}$ form, where $a$ and $C$ are non-zero constants satisfying $a=\alpha_{i}$, $na=\alpha_{j}; i\neq j$ and $C=(1+aw)^{-1/n}p_{i}^{1/n}$ for some $i=1,2$.
\end{theorem}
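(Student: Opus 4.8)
The plan is to apply to \eqref{maineq1} the constant‑coefficient operator killing the right‑hand side, argue that this degenerates $f^{n}+wf^{n-1}f'$ to a two‑term exponential polynomial of the same type, and then split that apart. Write $\Psi=f^{n}+wf^{n-1}f'=f^{n-1}(f+wf')$, $\phi(z)=f^{(k)}(z+c)$ and $P(z)=p_{1}e^{\alpha_{1}z}+p_{2}e^{\alpha_{2}z}$, so \eqref{maineq1} is $\Psi+\phi=P$. A standard comparison of characteristic functions (the term $f^{n}$ dominates $\Psi$, while $P$ has order $1$) first gives $T(r,f)\asymp r$, hence $\rho(f)=1$; this makes every $S(r,f)$ below — including the shift error in $T(r,\phi)\le T(r,f)+S(r,f)$ — genuinely $o(T(r,f))$. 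Put $L=\bigl(\tfrac{d}{dz}-\alpha_{1}\bigr)\bigl(\tfrac{d}{dz}-\alpha_{2}\bigr)$. Since $LP\equiv0$, applying $L$ to \eqref{maineq1} gives $L\Psi=-L\phi$, and a direct differentiation, extracting the factor $f^{n-3}$, produces $L\Psi=f^{n-3}M$ where $M$ is a differential polynomial in $f$ that is homogeneous of degree $3$, with monomials among $f^{3},f^{2}f',f^{2}f'',f^{2}f''',f(f')^{2},ff'f'',(f')^{3}$ and constant coefficients.

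The crucial step is to show $M\equiv0$. Assume not. Then $M/f^{3}$ is a polynomial in $f'/f,f''/f,f'''/f$, so $m(r,M/f^{3})=S(r,f)$; and since at a zero of $f$ (of any multiplicity) the functions $f'/f,f''/f,f'''/f$ have pole order at most $1,2,3$ respectively, every pole of $M/f^{3}$ has order $\le3$, whence $N(r,M/f^{3})\le 3N(r,1/f)+O(1)\le 3T(r,f)+O(1)$. Writing $f^{n}=(f^{n-3}M)\cdot(f^{3}/M)$ and using the first main theorem, $n\,m(r,f)\le m(r,f^{n-3}M)+T(r,M/f^{3})+O(1)$, so $T(r,f^{n-3}M)=m(r,f^{n-3}M)\ge(n-3)T(r,f)-S(r,f)$ since $f^{n-3}M$ is entire. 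On the other hand $T(r,f^{n-3}M)=T(r,L\phi)\le T(r,\phi)+S(r,\phi)\le T(r,f)+S(r,f)$. Together these force $(n-4)T(r,f)\le S(r,f)$, impossible for $n\ge5$ and $f$ transcendental. Hence $M\equiv0$, so $L\Psi=0$, i.e. $\Psi=c_{1}e^{\alpha_{1}z}+c_{2}e^{\alpha_{2}z}$ for constants $c_{1},c_{2}$, and then $\phi=(p_{1}-c_{1})e^{\alpha_{1}z}+(p_{2}-c_{2})e^{\alpha_{2}z}$.

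If $c_{1}=c_{2}=0$ then $f+wf'\equiv0$, so $f=Ce^{-z/w}$ and $\phi$ is a single exponential, contradicting $p_{1},p_{2}\ne0$, $\alpha_{1}\ne\alpha_{2}$. So some $c_{j}\ne0$; say $c_{2}\ne0$ (the case $c_{1}\ne0$ is symmetric, and both cannot hold, since each gives $f=(\text{const})e^{\alpha_{j}z/n}$ and $\alpha_{1}\ne\alpha_{2}$). Applying $\tfrac{d}{dz}-\alpha_{1}$ to $\Psi=c_{1}e^{\alpha_{1}z}+c_{2}e^{\alpha_{2}z}$ and pulling $f^{n-2}$ out of $\Psi'-\alpha_{1}\Psi$ gives $f^{n-2}A=c_{2}(\alpha_{2}-\alpha_{1})e^{\alpha_{2}z}$, with $A=-\alpha_{1}f^{2}+(n-\alpha_{1}w)ff'+(n-1)w(f')^{2}+wff''$. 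The right side is zero‑free, hence $f$ is zero‑free; being entire of finite order, $f=e^{\gamma}$ with $\gamma$ a polynomial. Substituting, $A=e^{2\gamma}R$ with $R=-\alpha_{1}+(n-\alpha_{1}w)\gamma'+nw(\gamma')^{2}+w\gamma''$ a polynomial in $z$, and $e^{n\gamma}R=c_{2}(\alpha_{2}-\alpha_{1})e^{\alpha_{2}z}$ forces $R$ to be a non‑zero constant and $n\gamma-\alpha_{2}z$ to be constant. Thus $f=Ce^{az}$ with $na=\alpha_{j}$ for one of $j=1,2$. Substituting into \eqref{maineq1} gives $C^{n}(1+aw)e^{\alpha_{j}z}+Ca^{k}e^{ac}e^{az}=p_{1}e^{\alpha_{1}z}+p_{2}e^{\alpha_{2}z}$; since $C,a\ne0$ and $na\ne a$, the exponents $\alpha_{j}$ and $a$ both occur on the left with non‑zero coefficients, so $a=\alpha_{i}$ with $i\ne j$ and $1+aw\ne0$, and comparing coefficients yields $C^{n}(1+aw)=p_{j}$ and $Ca^{k}e^{ac}=p_{i}$, which is the asserted description of the solution.

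The real work is expected to lie in the step $M\equiv0$: carrying out the differentiation to write $M$ explicitly, checking that it is homogeneous of degree $3$, and, above all, bounding the pole multiplicities of $M/f^{3}$ tightly (by $3$) so that the exponent threshold that emerges from the estimate is exactly $n\ge5$. Everything after $\Psi$ is known to be a two‑term exponential polynomial is soft.
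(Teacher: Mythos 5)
Your proposal is correct, and while it starts from the same reduction as the paper --- applying the annihilating operator $L=\bigl(\tfrac{d}{dz}-\alpha_{1}\bigr)\bigl(\tfrac{d}{dz}-\alpha_{2}\bigr)$ to \eqref{maineq1} and extracting $L\Psi=f^{n-3}M$ with $M$ homogeneous of degree $3$ (this is exactly the paper's $\phi$ in \eqref{impeq}) --- it then diverges in both main cases. For $M\not\equiv 0$, the paper invokes the Yang--Laine difference analogue of the Clunie lemma (Lemma \ref{analogueofclunielemm}) to get $m(r,\phi)=S(r,f)$ and $m(r,f\phi)=S(r,f)$; you instead estimate by hand, using the logarithmic derivative lemma for $m(r,M/f^{3})$, the pole-order bound $N(r,M/f^{3})\le 3N(r,1/f)+O(1)$, and $T(r,L\phi)\le T(r,f)+S(r,f)$, which recovers the threshold $n\ge 5$ directly; the price is that you must first establish $T(r,f)\asymp r$ so that the Chiang--Feng shift error is genuinely $o(T(r,f))$ --- this preliminary step is indeed standard but is only asserted in your write-up and should be written out (the paper avoids it because the Yang--Laine lemma absorbs the shift error for finite-order solutions). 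For $M\equiv 0$, your route is genuinely different and cleaner: you integrate the constant-coefficient ODE $L\Psi=0$ to get $\Psi=c_{1}e^{\alpha_{1}z}+c_{2}e^{\alpha_{2}z}$, then apply one first-order factor to obtain $f^{n-2}A$ equal to a zero-free exponential, forcing $f=e^{\gamma}$ with $\gamma$ a polynomial, and finish by elementary comparison; the paper instead argues from $\phi\equiv 0$ that $f$ has finitely many zeros via a local multiplicity count, uses Hadamard factorization with a nontrivial entire factor $\beta$, the Wen--Heittokangas--Laine lemma (Lemma \ref{whllem}) to force $\deg P=1$, and the Borel-type Lemma \ref{imp} to split the exponentials --- your version dispenses with both of those lemmas. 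One small remark: your coefficient relations $C^{n}(1+aw)=p_{j}$ (with $na=\alpha_{j}$) and $Ca^{k}e^{ac}=p_{i}$ (with $a=\alpha_{i}$) agree with the paper's own computation in its case $na=\alpha_{1}$, $a=\alpha_{2}$, where $C=(1+aw)^{-1/n}p_{1}^{1/n}$; the index $p_{i}$ appearing in the theorem statement is an index slip there, not a defect of your argument.
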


In the next result, we add a condition $N(r,1/f)=S(r,f)$ in the hypothesis of the above theorem and we prove the same conclusion of the above theorem when $n=4$ or $3$.

\begin{theorem}\label{mainth2}
Let $n=4$ or $3$ and $f$ be a finite order transcendental entire solution of the differential-difference equation \eqref{maineq1} with $N(r,1/f)=S(r,f)$. Then $f$ must be of $f(z)= C e^{az}$ form, where $a$ and $C$ are non-zero constants satisfying $a=\alpha_{i}$, $na=\alpha_{j}; i\neq j$ and $C=(1+aw)^{-1/n}p_{i}^{1/n}$ for some $i=1,2$.
\end{theorem}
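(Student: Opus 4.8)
The plan is to use the Hadamard factorization of $f$ to turn \eqref{maineq1} into an identity among a handful of exponential monomials, and then to apply a Borel-type lemma. Since $f$ is entire of finite order, write $f=h\,e^{g}$, where $g$ is a polynomial of degree $\rho(f)$ and $h$ is the canonical product built from the zeros of $f$; then $N(r,1/h)=N(r,1/f)=S(r,f)$. The hypothesis $N(r,1/f)=S(r,f)$ enters right here: for a canonical product the characteristic function is governed by its zero-counting function, so $T(r,h)=S(r,f)$, that is, $h$ is a small function of $f$. In particular, $f$ being transcendental forces $g$ to be non-constant. Substituting $f=h\,e^{g}$ into \eqref{maineq1} produces an identity of the shape
$$A(z)\,e^{ng(z)}+B(z)\,e^{g(z+c)}=p_{1}e^{\alpha_{1}z}+p_{2}e^{\alpha_{2}z},$$
where $A=h^{n-1}\bigl((1+wg')h+wh'\bigr)$ and $B$ is the coefficient arising from $f^{(k)}(z+c)$; both $A$ and $B$ are small functions of $f$, and $B\not\equiv0$ because $f^{(k)}\equiv 0$ is impossible for transcendental $f$.

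First I would show $\deg g=1$. If $\deg g\ge 2$, then the four exponents $ng(z)$, $g(z+c)$, $\alpha_{1}z$, $\alpha_{2}z$ have pairwise differences that are non-constant polynomials: for $ng(z)-g(z+c)$ this uses $n\ge 2$, since its leading coefficient is $(n-1)$ times that of $g$, while the other differences have degree $\deg g\ge 2$ or degree $1$. Hence the lemma on linear combinations of exponentials with small-function coefficients (a consequence of Borel's theorem; see, e.g., \cite{yybook}) forces all four coefficients to vanish, contradicting $p_{1}p_{2}\ne 0$. Therefore $g(z)=az+b$ with $a\ne 0$, so $\rho(f)=1$, and the identity becomes
$$\widetilde A(z)\,e^{naz}+\widetilde B(z)\,e^{az}=p_{1}e^{\alpha_{1}z}+p_{2}e^{\alpha_{2}z},$$
with $\widetilde A,\widetilde B$ small functions of $f$.

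Next I would rule out $\widetilde A\equiv0$: this is equivalent to $f+wf'\equiv0$, hence $f=Ce^{-z/w}$, and then \eqref{maineq1} collapses to $f^{(k)}(z+c)=p_{1}e^{\alpha_{1}z}+p_{2}e^{\alpha_{2}z}$, which is absurd since the left side is a single exponential while $p_{1},p_{2}\ne 0$ and $\alpha_{1}\ne\alpha_{2}$. Thus $\widetilde A\not\equiv 0$, and likewise $\widetilde B\not\equiv 0$. Since $na\ne a$ (as $n\ge 2$, $a\ne 0$) and $\alpha_{1}\ne\alpha_{2}$, a short case analysis on how $na,a$ relate to $\alpha_{1},\alpha_{2}$ --- again via the Borel-type lemma, using $\widetilde A,\widetilde B\not\equiv 0$ and $p_{i}\ne 0$ --- shows that the only surviving possibility is $\{\alpha_{1},\alpha_{2}\}=\{na,a\}$, with $\widetilde A$ and $\widetilde B$ each equal to a nonzero constant. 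In particular $h^{n-1}\bigl((1+aw)h+wh'\bigr)$ is a nonzero constant, so $h$ has no zeros; being entire of finite order, $h=e^{L}$ with $L$ a polynomial, and feeding $h=e^{L}$ back into that relation forces $(1+aw)+wL'$ to be a zero-free polynomial, hence a constant, hence $L$ is linear or constant; if $L$ were linear, $e^{nL}$ would be constant, which is impossible, so $L$ is constant and $h\equiv h_{0}$ is a nonzero constant. Then $(1+aw)h_{0}^{n}$ is a nonzero constant, so $1+aw\ne 0$ and $h_{0}$ --- hence $C$ --- is determined, giving $f(z)=Ce^{az}$; substituting this back into \eqref{maineq1} and matching the coefficients of $e^{naz}$ and $e^{az}$ yields the asserted relations $a,na\in\{\alpha_{1},\alpha_{2}\}$ together with $C=(1+aw)^{-1/n}p_{i}^{1/n}$.

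The step I expect to be the real obstacle is the first one: proving that the canonical product $h$ is a small function of $f$ under $N(r,1/f)=S(r,f)$, and, however one organizes the argument, securing the a priori bound $\rho(f)\le 1$ straight from \eqref{maineq1}. For $n=4$ a comparison of the characteristic functions of the two sides of \eqref{maineq1} (using $n-3\ge 1$ and $T(r,p_{1}e^{\alpha_{1}z}+p_{2}e^{\alpha_{2}z})\asymp r$) already gives $\rho(f)\le 1$ with no extra hypothesis, but for $n=3$ one genuinely needs $N(r,1/f)=S(r,f)$ to compensate for the missing Nevanlinna room; this is precisely why the hypothesis is imposed and why $n$ is restricted to $3$ and $4$ here, the range $n\ge 5$ being covered without it in Theorem \ref{mainth1}. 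Everything after that first step is routine bookkeeping with the Borel-type lemma.
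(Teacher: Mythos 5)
Your argument stands or falls on the very first step, and that step is a genuine gap: the hypothesis $N(r,1/f)=S(r,f)$ does \emph{not} imply that the canonical product $h$ over the zeros of $f$ satisfies $T(r,h)=S(r,f)$. For a canonical product, the characteristic is not controlled pointwise by the integrated counting function; it is controlled by weighted integrals of $n(t)$, and when the order of the zero sequence is an integer equal to the genus these can exceed $N(r,1/h)$ by an unbounded factor. Concretely, put a zero of multiplicity $R_k^2$ at each point $R_k$, with $R_{k+1}=e^{R_k}$, and let $f$ be the resulting canonical product (genus $2$): for $R_k\le r<R_{k+1}$ one gets $T(r,f)\asymp k\,r^2$ while $N(r,1/f)=O(R_k^{3})$, so $N(r,1/f)=S(r,f)$ even though $h=f$ is certainly not a small function of $f$. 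What your step really requires is $\lambda(f)<\rho(f)$ (the hypothesis of Theorem \ref{mainth3} and Lemma \ref{newlem}), which is strictly stronger than $N(r,1/f)=S(r,f)$ and is not assumed in Theorem \ref{mainth2}. Without $T(r,h)=S(r,f)$ your coefficients $A$, $B$ are not small, so condition (3) of Lemma \ref{imp} is unavailable, and both your exclusion of $\deg g\ge 2$ and your final case analysis collapse. (A secondary lapse, even granting $h$ small: applying the four-term Borel-type lemma requires each coefficient to be $o\bigl(T(r,e^{(\alpha_1-\alpha_2)z})\bigr)=o(r)$, which $S(r,f)$ does not give when $\rho(f)\ge 2$; one must regroup $p_1e^{\alpha_1 z}+p_2e^{\alpha_2 z}$ into a single coefficient before invoking the lemma.)

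The paper's proof takes a different and safer route precisely to avoid assuming anything about the zero distribution beyond the counting function: it differentiates \eqref{maineq1} twice and eliminates $e^{\alpha_1 z}$, $e^{\alpha_2 z}$ to obtain $f^{n-3}\phi=-(\alpha_1\alpha_2N-(\alpha_1+\alpha_2)N'+N'')$ with $N=f^{(k)}(z+c)$ and $\phi$ a degree-three differential polynomial in $f$. The Clunie-type Lemma \ref{analogueofclunielemm} and the logarithmic-derivative lemma give $m(r,\phi)$, $m(r,\phi/f)$, $m(r,\phi/f^3)=S(r,f)$, and it is only here that the hypothesis $N(r,1/f)=S(r,f)$ is used (to bound $N(r,1/f^{3})$), yielding $T(r,f)=S(r,f)$ unless $\phi\equiv 0$; this is also exactly why $n=3,4$ need the extra hypothesis while $n\ge 5$ does not. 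Then the identity $\phi\equiv 0$ itself, via a local multiplicity count at a zero of $f$, shows $f$ has only finitely many zeros, so Hadamard gives $f=\beta e^{P}$ with $\beta$ of order zero; the exponential-sum Lemma \ref{whllem} forces $\deg P=1$, and only then is the Borel-type Lemma \ref{imp} applied, now with genuinely small (polynomial-type) coefficients. If you want to salvage your plan, you must either prove smallness of the canonical product under the stated hypothesis for solutions of \eqref{maineq1} specifically, or first establish finiteness of the zero set as the paper does; as written, the proposal does not prove the theorem.
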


In the same paper \cite{jh}, Li and Huang studied one more type of certain differential-difference equation
\begin{equation}\label{maineq3}
f^{n}(z)+wf^{n-1}(z)f^{'}(z)+q(z)e^{Q(z)}f(z+c)=p_{1}e^{\alpha_{1}z}+p_{2}e^{\alpha_{2} z},
\end{equation}
where $n$ is a natural number, $k\geq 0$ is an integer, $w, c, p_{1}, p_{2}, \alpha_{1}, \alpha_{2}$ are non-zero constants satisfying $\alpha_{1}\neq\alpha_{2}$, $q\not\equiv 0$ is a polynomial and $Q$ is a non-constant polynomial. They provided the following result:

\begin{theorem}\rm\cite{jh}
Suppose that $n\geq 4$ and $f$ is a transcendental entire solution with finite order of equation \eqref{maineq3} with $\lambda(f)<\rho(f)$.
Then the following conclusions hold:
\begin{enumerate}[(i)]
\item Each solution $f$ satisfies $\rho(f)=\deg Q=1$.
\item If $n\geq 1$ and $f$ is a solution which belongs to $\Gamma_{0}=\{e^{\alpha(z)}:\alpha(z)$ is a non-constant polynomial\},  then
$$f(z)=e^{(\alpha_{2} z/n)+\beta}, \qquad Q(z)=(\alpha_{1}-\frac{\alpha_{2}}{n})z+b$$
or $$f(z)=e^{(\alpha_{1}z/n)+\beta}, \qquad Q(z)=(\alpha_{2}-\frac{\alpha_{1}}{n})z+b,$$
where $\beta$ and $b$ are constants.
\end{enumerate}
\end{theorem}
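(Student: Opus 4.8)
The plan is to establish conclusion (i) first and then obtain (ii) as a refinement inside $\Gamma_{0}$. For (i): since $f$ is entire of finite order with $\lambda(f)<\rho(f)$, Hadamard's factorization gives $f=he^{g}$ with $g$ a polynomial, $\rho:=\rho(f)=\deg g\ge1$ an integer, and $h$ entire of order $\rho(h)=\lambda(f)<\rho$. Using $f^{n}+wf^{n-1}f'=f^{n}+\tfrac{w}{n}(f^{n})'$ and substituting, equation \eqref{maineq3} becomes
$$h^{n-1}B\,e^{ng}+q\,h(z+c)\,e^{Q+g(z+c)}=p_{1}e^{\alpha_{1}z}+p_{2}e^{\alpha_{2}z},\qquad B:=h+wh'+whg',$$
where $B\not\equiv0$ (otherwise $h'/h=-(1+wg')/w$ would make $h$ of order $\rho$) and $q\,h(z+c)\not\equiv0$. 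The key device is the constant-coefficient operator $L[u]:=u''-(\alpha_{1}+\alpha_{2})u'+\alpha_{1}\alpha_{2}u$, which annihilates $p_{1}e^{\alpha_{1}z}+p_{2}e^{\alpha_{2}z}$ since $\alpha_{1},\alpha_{2}$ are its characteristic roots. Because $L$ commutes with $\tfrac{d}{dz}$, applying it to the equation and pulling out exponential factors collapses everything to a two-term identity $\widetilde{\Phi}\,e^{ng}+\Psi\,e^{Q+g(z+c)}\equiv0$, where $\widetilde{\Phi},\Psi$ are polynomial combinations of $h,h',h'',q$, a shift of $h$, and $g',g''$, hence entire of order $\le\lambda(f)<\rho$. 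If $\widetilde{\Phi}\equiv0$, the first-order linear ODE $\tfrac{w}{n}\Phi'+(1+wg')\Phi\equiv0$ for $\Phi$ (the coefficient of $e^{ng}$ in $L[f^{n}]$) together with the order bound forces $\Phi\equiv0$, i.e.\ $L[f^{n}]\equiv0$, whence $f^{n}=c_{1}e^{\alpha_{1}z}+c_{2}e^{\alpha_{2}z}$ and $\rho(f)\le1$. If $\widetilde{\Phi}\not\equiv0$, then $e^{\,ng-Q-g(z+c)}=-\Psi/\widetilde{\Phi}$ has order $<\rho$, so $ng$ and $Q+g(z+c)$ share a common leading part $\mu$ of degree $\rho$; writing $ng=\mu+r$, $Q+g(z+c)=\mu+\widetilde{r}$ with $\deg r,\deg\widetilde{r}<\rho$ turns the main equation into $e^{\mu}\bigl(h^{n-1}Be^{r}+q\,h(z+c)e^{\widetilde{r}}\bigr)=p_{1}e^{\alpha_{1}z}+p_{2}e^{\alpha_{2}z}$; here the bracket is not $\equiv0$ (the right side is not) and has order $<\rho$, so the left side has order $\rho$, and comparison forces $\rho=1$. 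In all cases $\rho(f)=1$; and if $\deg Q\ge2$ the term $q\,h(z+c)e^{Q+g(z+c)}$ would have order $\deg Q>1$ and dominate the left side, contradicting that the right side has order $1$. Hence $\rho(f)=\deg Q=1$.

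For (ii), let $f=e^{\alpha}$ with $\alpha$ a non-constant polynomial. Then $f^{n}=e^{n\alpha}$, $f^{n-1}f'=\alpha'e^{n\alpha}$, $f(z+c)=e^{\alpha(z+c)}$, so \eqref{maineq3} reads
$$(1+w\alpha')\,e^{n\alpha}+q\,e^{Q+\alpha(z+c)}-p_{1}e^{\alpha_{1}z}-p_{2}e^{\alpha_{2}z}\equiv0 .$$
First, $1+w\alpha'\not\equiv0$, since otherwise the left side is a polynomial times a single exponential, hence has only finitely many zeros, while $p_{1}e^{\alpha_{1}z}+p_{2}e^{\alpha_{2}z}$ has infinitely many. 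Now every coefficient is a polynomial, so the growth hypotheses of the Borel-type lemma on linear combinations of exponentials of polynomials are automatic for each non-constant difference of the exponents $n\alpha,\ Q+\alpha(z+c),\ \alpha_{1}z,\ \alpha_{2}z$; since the coefficients are not all zero, two of these exponents must differ by a constant. As $\alpha_{1}z-\alpha_{2}z$ is non-constant while $-p_{1},-p_{2}$ are non-zero constants, neither $\alpha_{1}z$ nor $\alpha_{2}z$ can remain an isolated mode, so each must coincide up to an additive constant with exactly one of $n\alpha,\ Q+\alpha(z+c)$ (whose coefficients $1+w\alpha',q$ are also $\not\equiv0$); this forces the pairing $\{n\alpha,\alpha_{i}z\}$, $\{Q+\alpha(z+c),\alpha_{j}z\}$ with $\{i,j\}=\{1,2\}$. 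Then $n\alpha-\alpha_{i}z=\text{const}$ gives $\alpha(z)=(\alpha_{i}/n)z+\beta$, and $Q+\alpha(z+c)-\alpha_{j}z=\text{const}$ gives $Q(z)=(\alpha_{j}-\alpha_{i}/n)z+b$; the two choices of $(i,j)$ are precisely the two stated forms.

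I expect the heart of (i) to be the real obstacle. A direct order comparison fails exactly when $ng$ and $Q+g(z+c)$ have (nearly) the same leading part, since then the order-$1$ right-hand side could in principle be produced by cancellation among high-order exponential terms; the annihilator $L$ dissolves this by reducing four exponential terms to two, after which the bound $\rho(h)=\lambda(f)<\rho(f)$ becomes decisive. The delicate points are verifying that $\widetilde{\Phi},\Psi$ really have order below $\rho(f)$, and that the degenerate branch $\widetilde{\Phi}\equiv0$ genuinely forces $\rho(f)\le1$ (through the ODE for $\Phi$) rather than leaking information. In (ii) the only care needed is the bookkeeping of which of the four exponents may coincide.
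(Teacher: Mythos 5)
Your proposal is essentially correct, but it is not the paper's route: the statement you proved is quoted from Li--Huang \cite{jh}, and this paper gives no proof of it at all; the nearest thing is the proof of Theorem \ref{mainth3} (the $n=3,2$ analogue), which proceeds quite differently --- it first rules out $\rho(f)<1$ via a Riccati equation for $f'/f$, then rules out $\rho(f)>1$ by eliminating $e^{Q}$ and using the Clunie-type Lemma \ref{analogueofclunielemm} together with Lemmas \ref{il}, \ref{hk}, \ref{newlem} to force $\phi\equiv0$, finishing with Hadamard factorization and Lemma \ref{whllem}, and it defers $\deg Q=1$ and conclusion (ii) to \cite{jh}. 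You instead exploit $\lambda(f)<\rho(f)$ at the outset through Hadamard factorization ($f=he^{g}$ with $\rho(f)=\deg g\ge1$ an integer, which incidentally makes the whole $\rho(f)<1$ discussion vacuous), annihilate the right-hand side with $L[u]=u''-(\alpha_{1}+\alpha_{2})u'+\alpha_{1}\alpha_{2}u$, and reduce to a two-term exponential identity whose coefficients have order below $\rho(f)$, after which order and leading-part comparison yields $\rho(f)=\deg Q=1$; no Clunie-type lemma is needed, and your argument never actually uses $n\ge4$, so it also covers the paper's Theorem \ref{mainth3}. Two points deserve tightening: your justification that $B\not\equiv0$, and likewise the claim that $\tfrac{w}{n}\Phi'+(1+wg')\Phi\equiv0$ forces $\Phi\equiv0$, silently exclude the degenerate sub-case $\rho(f)=1$, $g(z)=-z/w+\mathrm{const}$, where the exponent $-nz/w-ng$ collapses to a constant; there $\rho(f)=1$ holds trivially and $\deg Q=1$ still follows from your final domination step (which only needs $qh(z+c)\not\equiv0$), but the intermediate assertion $f^{n}=c_{1}e^{\alpha_{1}z}+c_{2}e^{\alpha_{2}z}$ is not valid in that sub-case, and $B\not\equiv0$ is in fact never used. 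For (ii), your Borel-type grouping of the four exponents (merge exponents differing by constants, then apply Lemma \ref{imp} to conclude each group vanishes) is the standard argument and is what \cite{jh} is cited for; it is a sketch, but a correct one.
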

In the next result, we prove that the same conclusions hold when $n=3$ or $2$ under the same hypothesis as given in the above theorem.

\begin{theorem}\label{mainth3}
Suppose $n=3$ or $2$ and $f$ is a finite order transcendental entire solution  of equation \eqref{maineq3} with $\lambda(f)<\rho(f)$. Then the following conclusions hold:
\begin{enumerate}[(i)]
\item Each solution $f$ satisfies $\rho(f)=\deg Q=1$.
\item If $n\geq 1$ and $f$ is a solution which belongs to $\Gamma_{0}=\{e^{\alpha(z)}:\alpha(z)$ is a non-constant polynomial\},  then
$$f(z)=e^{(\alpha_{2} z/n)+\beta}, \qquad Q(z)=(\alpha_{1}-\frac{\alpha_{2}}{n})z+b$$
or $$f(z)=e^{(\alpha_{1}z/n)+\beta}, \qquad Q(z)=(\alpha_{2}-\frac{\alpha_{1}}{n})z+b,$$
where $\beta$ and $b$ are constants.
\end{enumerate}
\end{theorem}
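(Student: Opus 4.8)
The plan is to combine Hadamard's factorisation theorem with the classical Borel lemma on sums of exponentials. Since $f$ is a transcendental entire function of finite order with $\lambda(f)<\rho(f)$, Hadamard's theorem lets us write $f=He^{h}$, where $h$ is a polynomial with $\deg h=\rho(f)=:m\ge1$ and $H$ is the canonical product formed from the zeros of $f$, so that $\rho(H)=\lambda(f)<m$. A short computation gives $f^{n}+wf^{n-1}f'=H^{n-1}Ge^{nh}$ with $G:=(1+wh')H+wH'$, and $q(z)e^{Q(z)}f(z+c)=q(z)H(z+c)e^{Q(z)+h(z+c)}$, so that \eqref{maineq3} becomes
\begin{equation*}
B_{1}e^{nh}+B_{2}e^{Q+h(z+c)}=p_{1}e^{\alpha_{1}z}+p_{2}e^{\alpha_{2}z},\tag{$\star$}
\end{equation*}
where $B_{1}:=H^{n-1}G$ and $B_{2}:=q(z)H(z+c)$ are entire of order $\le\lambda(f)<m$. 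If $f+wf'\equiv0$ then $f=Ce^{-z/w}$, and substituting this into \eqref{maineq3} reduces it to $q(z)Ce^{-(z+c)/w}e^{Q(z)}=p_{1}e^{\alpha_{1}z}+p_{2}e^{\alpha_{2}z}$, which is impossible by a comparison of orders (if $\deg Q\ge2$) or by Borel's lemma (if $\deg Q=1$). So $f+wf'\not\equiv0$, hence $G\not\equiv0$, and since $q\not\equiv0$ both $B_{1}$ and $B_{2}$ are not identically zero.

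For part (i), I first show $m=1$. Suppose $m\ge2$. As $\rho(B_{1})<m=\deg(nh)$, the term $B_{1}e^{nh}$ has order exactly $m$; rewriting \eqref{maineq3} as $B_{1}e^{nh}=p_{1}e^{\alpha_{1}z}+p_{2}e^{\alpha_{2}z}-B_{2}e^{Q+h(z+c)}$ and noting that $p_{1}e^{\alpha_{1}z}+p_{2}e^{\alpha_{2}z}$ has order $1<m$, we get that $B_{2}e^{Q+h(z+c)}$ has order $m$, hence (since $\rho(B_{2})<m$) $\deg(Q+h(z+c))=m$; in particular $\deg Q\le m$. Dividing $(\star)$ by $e^{Q+h(z+c)}$ and setting $\psi:=nh-Q-h(z+c)$ and $\chi:=\alpha_{2}z-Q-h(z+c)$ (so $\deg\chi=m$), we obtain
\begin{equation*}
B_{1}e^{\psi}+B_{2}=\bigl(p_{1}e^{(\alpha_{1}-\alpha_{2})z}+p_{2}\bigr)e^{\chi}.
\end{equation*}
If $\deg\psi<m$ — which occurs precisely when $\deg Q=m$ and the leading coefficient of $Q$ equals $(n-1)$ times that of $h$ — the left side has order $<m$ while the right side has order $m$, a contradiction. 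If $\deg\psi=m$, then in the three-term identity $B_{1}e^{\psi}+B_{2}e^{0}-\bigl(p_{1}e^{(\alpha_{1}-\alpha_{2})z}+p_{2}\bigr)e^{\chi}=0$ each pairwise difference of the exponents $\psi,\,0,\,\chi$ is a polynomial of degree $m$ (note $\psi-\chi=nh-\alpha_{2}z$), while each coefficient has order $<m$; Borel's lemma then forces $B_{1}\equiv0$, again a contradiction. Hence $m=1$. Finally, if $\rho(f)=1$ and $\deg Q\ge2$, then isolating $B_{2}e^{Q+h(z+c)}$ in $(\star)$ puts a function of order $\deg Q\ge2$ equal to one of order $\le1$, which is impossible; as $Q$ is non-constant, $\deg Q=1$. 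This proves (i).

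For part (ii), let $f=e^{\alpha(z)}$ with $\alpha$ a non-constant polynomial, so $\lambda(f)=0<\rho(f)$ and $H\equiv1$; the argument of part (i), which with $H\equiv1$ needs no lower bound on $n$, gives $\deg\alpha=\deg Q=1$, so we may write $\alpha(z)=\gamma z+\delta$ and $Q(z)=\mu z+\nu$ with $\gamma,\mu\ne0$. Using $f^{n}+wf^{n-1}f'=(1+w\gamma)e^{n\delta}e^{n\gamma z}$ and $q(z)e^{Q(z)}f(z+c)=q(z)e^{\nu+\gamma c+\delta}e^{(\mu+\gamma)z}$, equation \eqref{maineq3} becomes
\begin{equation*}
(1+w\gamma)e^{n\delta}e^{n\gamma z}+q(z)e^{\nu+\gamma c+\delta}e^{(\mu+\gamma)z}=p_{1}e^{\alpha_{1}z}+p_{2}e^{\alpha_{2}z}.
\end{equation*}
If $1+w\gamma=0$ the left side is a single (polynomial-coefficient) exponential, and matching it against the two distinct exponentials on the right via Borel's lemma forces $p_{1}=0$ or $p_{2}=0$, which is excluded; hence $1+w\gamma\ne0$. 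Comparing the exponents $n\gamma,\ \mu+\gamma,\ \alpha_{1},\ \alpha_{2}$, Borel's lemma rules out the cases $\mu+\gamma=n\gamma$ and $n\gamma\notin\{\alpha_{1},\alpha_{2}\}$ (each would force some $p_{i}$ to vanish), leaving only $\mu+\gamma\ne n\gamma$ with $\{n\gamma,\mu+\gamma\}=\{\alpha_{1},\alpha_{2}\}$. Reading off coefficients then gives $n\gamma=\alpha_{j}$, $\mu+\gamma=\alpha_{i}$, $(1+w\gamma)e^{n\delta}=p_{j}$ for $\{i,j\}=\{1,2\}$; that is, $f(z)=e^{(\alpha_{j}z/n)+\beta}$ and $Q(z)=(\alpha_{i}-\alpha_{j}/n)z+b$ with $\beta=\delta$ and $b=\nu$, which are exactly the two asserted forms.

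The step I expect to be the main obstacle is the case $m\ge2$ of part (i). Since the hypothesis is only $\lambda(f)<\rho(f)$ and not $N(r,1/f)=S(r,f)$, the coefficients $B_{1},B_{2}$ in $(\star)$ need not be $S(r,f)$-small and may have order $\ge1$, so Borel's lemma cannot be applied directly to the four-term form of $(\star)$: the exponent difference $\alpha_{1}z-\alpha_{2}z$ has degree only $1$, possibly below $\rho(B_{1})$. The remedy is to first use the order of $B_{1}e^{nh}$ to force $\deg(Q+h(z+c))=m$, then divide by $e^{Q+h(z+c)}$ and absorb $p_{1}e^{\alpha_{1}z}+p_{2}e^{\alpha_{2}z}$ into the single term $\bigl(p_{1}e^{(\alpha_{1}-\alpha_{2})z}+p_{2}\bigr)e^{\chi}$, whose exponent has degree $m$; then every remaining exponent difference has degree $m>\max(\rho(B_{1}),\rho(B_{2}),1)$ and Borel's lemma applies. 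Handling the bookkeeping of degrees and leading coefficients — the split $\deg\psi=m$ versus $\deg\psi<m$, and the position of $\deg Q$ relative to $m$ — is the delicate part.
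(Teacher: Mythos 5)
Your proposal is correct, but it takes a genuinely different route from the paper's proof. The paper differentiates \eqref{maineq3}, eliminates the exponential term, and works with the resulting identity $f^{n-2}\phi=M(z)P(z)-P'(z)H(z)$: it relies on the Clunie-type Lemma \ref{analogueofclunielemm}, on $N(r,1/f)=S(r,f)$ via Lemma \ref{newlem}, on a separate and rather long treatment of the case $\rho(f)<1$ (Riccati equation, residue computation), on Lemma \ref{whllem} in the case $\phi\equiv0$, and the restriction $n=3$ or $2$ enters through slightly different proximity estimates for $\phi/f^{j}$; finally $\deg Q=1$ and part (ii) are obtained only by citing the technique of Li--Huang. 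You instead put Hadamard's factorization $f=He^{h}$ up front, which makes $\rho(f)=\deg h$ a positive integer (so the paper's case $\rho(f)<1$ is in fact vacuous under $\lambda(f)<\rho(f)$), and you settle $\deg h\ge 2$ by pure order comparison plus the Borel-type Lemma \ref{imp}; the key device is dividing by $e^{Q+h(z+c)}$ and absorbing $p_{1}e^{\alpha_{1}z}+p_{2}e^{\alpha_{2}z}$ into the single term $\bigl(p_{1}e^{(\alpha_{1}-\alpha_{2})z}+p_{2}\bigr)e^{\chi}$, so that every exponent difference (including $\psi-\chi=nh-\alpha_{2}z$) has degree $m\ge2$ while the coefficients $B_{1},B_{2}$ have order at most $\lambda(f)<m$, which is exactly what Lemma \ref{imp} needs. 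The necessary side checks are in place: $B_{1}\not\equiv0$ via the exclusion of $f+wf'\equiv0$ (i.e. $f=Ce^{-z/w}$) through the equation itself, $B_{2}\not\equiv0$ since $q\not\equiv0$, and the dichotomy $\deg\psi=m$ versus $\deg\psi<m$ is handled correctly in both branches. What your route buys is considerable: no Clunie lemma, no differentiated equations, no Riccati step, a self-contained proof of part (ii) rather than a reference to \cite{jh}, and validity uniformly for all $n\ge1$, showing that under $\lambda(f)<\rho(f)$ the restriction to $n=2,3$ is not needed for conclusion (i); what the paper's machinery buys is that it is the same toolkit that runs in Theorems \ref{mainth1} and \ref{mainth2}, where no such factorization of $f$ is available.
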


Prior to \cite{jh}, Chen et.al \cite{chw} studied the equation \eqref{maineq3} when $\alpha_{2}=-\alpha_{1}$ and they didn't take  $\lambda(f)<\rho(f)$ condition in their hypothesis and proved the same conclusions.
\section{\textbf{Preliminary Results}}
The following lemma gives proximity function of logarithmic derivative of a meromorphic function $f$ . 

\begin{lemma}\rm \cite{ilaine} \label{il}
Suppose $f$ is a transcendental meromorphic function and $k$ is a natural number. Then
	$$m\left(r,\frac{f^{(k)}}{f}\right)=S(r,f).$$
If $f$ is of finite order growth, then
	$$m\left(r,\frac{f^{(k)}}{f}\right)=O(\log r).$$
\end{lemma}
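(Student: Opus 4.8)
The plan is to reduce the general statement to the first-order case $k=1$ and then establish that case directly from the Poisson--Jensen formula. For the reduction I would use the telescoping product
$$\frac{f^{(k)}}{f}=\frac{f^{(k)}}{f^{(k-1)}}\cdot\frac{f^{(k-1)}}{f^{(k-2)}}\cdots\frac{f'}{f},$$
so that $m(r,f^{(k)}/f)\le\sum_{j=1}^{k}m\left(r,(f^{(j-1)})'/f^{(j-1)}\right)$. Each factor is the first-order logarithmic derivative of the transcendental meromorphic function $f^{(j-1)}$, so the $k=1$ case (once proved) gives $m\left(r,(f^{(j-1)})'/f^{(j-1)}\right)=S(r,f^{(j-1)})$. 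The remaining point is that these quantities are genuinely $S(r,f)$: from $N(r,f')\le 2N(r,f)$ and $m(r,f')\le m(r,f)+m(r,f'/f)$ one gets $T(r,f')\le 2T(r,f)+m(r,f'/f)=O(T(r,f))$, and inductively $T(r,f^{(j)})=O(T(r,f))$, whence $S(r,f^{(j)})\subseteq S(r,f)$. This collapses the whole problem to proving $m(r,f'/f)=S(r,f)$.

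For $k=1$, I would start from the Poisson--Jensen formula on $|z|\le R$ and differentiate it in $z$. Writing $a_\mu$ for the zeros and $b_\nu$ for the poles of $f$ in $|z|<R$, this produces the pointwise representation
$$\frac{f'(z)}{f(z)}=\frac{1}{2\pi}\int_0^{2\pi}\log|f(Re^{i\phi})|\,\frac{2Re^{i\phi}}{(Re^{i\phi}-z)^2}\,d\phi+\sum_\mu\left(\frac{1}{z-a_\mu}+\frac{\overline{a_\mu}}{R^2-\overline{a_\mu}z}\right)-\sum_\nu\left(\frac{1}{z-b_\nu}+\frac{\overline{b_\nu}}{R^2-\overline{b_\nu}z}\right),$$
valid for $z=re^{i\theta}$, $r<R$, away from the $a_\mu,b_\nu$. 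Taking absolute values, the boundary kernel satisfies $|2Re^{i\phi}(Re^{i\phi}-z)^{-2}|\le 2R(R-r)^{-2}$, so the integral is bounded by a multiple of $\frac{R}{(R-r)^2}\bigl(m(R,f)+m(R,1/f)\bigr)=O\!\left(\frac{R}{(R-r)^2}T(R,f)\right)$, while the number of terms in the two sums is $n(R,1/f)+n(R,f)=O(T(R,f))$.

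The technical heart is to bound $m(r,f'/f)=\frac{1}{2\pi}\int_0^{2\pi}\log^+|f'/f|\,d\theta$ despite the singular terms $\frac{1}{z-a_\mu}$ and $\frac{1}{z-b_\nu}$, which blow up when $re^{i\theta}$ passes near a zero or pole. My plan is to fix $s\in(0,1)$ and use $\log^+x\le\frac{1}{s}\log(1+x^s)$ together with the concavity of $\log$ (Jensen's inequality), reducing the estimate to bounding $\frac{1}{2\pi}\int_0^{2\pi}|f'/f|^s\,d\theta$. The gain is that $\int_0^{2\pi}|re^{i\theta}-a_\mu|^{-s}\,d\theta$ converges uniformly for $s<1$, so each singular term contributes only an $O(1)$ amount; summing over the $O(T(R,f))$ zeros and poles and combining with the boundary contribution yields
$$m(r,f'/f)\le C\left(\log^+T(R,f)+\log\frac{R}{R-r}+\log R+1\right)$$
for a constant $C$ depending only on $s$. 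I expect this estimation, in particular the bookkeeping of the singular sums, to be the main obstacle.

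Finally I would convert the $R$-radius bound into the two $r$-radius conclusions. If $f$ has finite order, then $T(2r,f)\le(2r)^{\rho(f)+1}$ for all large $r$, so taking $R=2r$ makes every term on the right $O(\log r)$, which gives $m(r,f'/f)=O(\log r)$ outright, and the telescoping reduction then yields $m(r,f^{(k)}/f)=O(\log r)$. In the general transcendental case, choosing $R=r+\frac{1}{T(r,f)}$ (equivalently, invoking a standard Borel-type growth lemma) lets one replace $T(R,f)$ by $T(r,f)$ at the cost of an exceptional set of finite linear measure, so the right-hand side becomes $O\!\left(\log^+T(r,f)+\log r\right)=S(r,f)$; feeding this back through the $k=1$ reduction of the first paragraph gives $m(r,f^{(k)}/f)=S(r,f)$ for every natural number $k$, as claimed.
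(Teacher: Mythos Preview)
Your outline is the classical proof of the lemma on the logarithmic derivative and is essentially correct: the telescoping reduction to $k=1$, the differentiated Poisson--Jensen representation, the concavity trick with an exponent $s\in(0,1)$ to tame the singular sums, and the Borel-type choice of $R$ to absorb $T(R,f)$ back into $T(r,f)$ at the cost of a finite-measure exceptional set are precisely the standard ingredients (as in Hayman or Laine). One small point worth tightening is the passage from $S(r,f^{(j-1)})$ to $S(r,f)$: you should remark that each $f^{(j-1)}$ is again transcendental meromorphic, and that the finitely many exceptional sets arising at each step combine into a single set of finite linear measure.

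As for the comparison you asked about: the paper gives \emph{no} proof of this lemma at all. It is stated as a preliminary result with a citation to Laine's monograph, so there is no ``paper's own proof'' to compare against. Your proposal therefore goes well beyond what the paper does; what you have sketched is essentially the argument one would find in the cited reference.
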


Next lemma estimates the characteristic function of a shift of a meromorphic function $f$.
\begin{lemma}\rm\cite{cf}\label{cflemma}
Suppose $f$ is a meromorphic function of finite order $\rho$ and $c$ is a non-zero complex number. Then for every $\epsilon>0$,
	$$T(r, f(z+c))=T(r,f)+O(r^{\rho-1+\epsilon})+O(\log r).$$
\end{lemma}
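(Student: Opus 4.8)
The plan is to prove the two-sided estimate by controlling the proximity function and the counting function separately, with the whole argument resting on the difference analogue of the lemma on the logarithmic derivative. Write $T(r,f(z+c)) = m(r,f(z+c)) + N(r,f(z+c))$. The engine of the proof is the \emph{logarithmic difference estimate}
$$m\left(r,\frac{f(z+c)}{f(z)}\right) + m\left(r,\frac{f(z)}{f(z+c)}\right) = O(r^{\rho-1+\epsilon}),$$
valid for finite-order $f$ and every $\epsilon>0$; I would establish this first and then bootstrap from it to the characteristic function.

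For the proximity part, the elementary inequality $\log^{+}|ab|\le \log^{+}|a|+\log^{+}|b|$, applied with $a=f(z)$ and $b=f(z+c)/f(z)$ and integrated over $|z|=r$, gives
$$m(r,f(z+c)) \le m(r,f) + m\left(r,\frac{f(z+c)}{f(z)}\right),$$
while the symmetric choice (writing $f(z)$ as the product of $f(z+c)$ and $f(z)/f(z+c)$) yields the reverse inequality. Together with the logarithmic difference estimate these give $|m(r,f(z+c)) - m(r,f)| = O(r^{\rho-1+\epsilon})$.

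For the counting part, I would observe that the poles of $f(z+c)$ inside $|z|\le r$ are exactly the points $w-c$ with $w$ a pole of $f$ satisfying $|w-c|\le r$; from the inclusions $\{|w|\le r-|c|\}\subseteq\{|w-c|\le r\}\subseteq\{|w|\le r+|c|\}$ one obtains $n(r-|c|,f)\le n(r,f(z+c))\le n(r+|c|,f)$. After the usual integration this yields $N(r,f(z+c)) = N(r+|c|,f)+O(\log r)$ up to the matching lower bound, and the radius shift is harmless because $N(r+|c|,f)-N(r,f)$ is bounded by $\int_r^{r+|c|} n(t,f)\,t^{-1}\,dt \le |c|\,r^{-1}\,n(r+|c|,f)$, which is $O(r^{\rho-1+\epsilon})$ since a finite-order meromorphic function satisfies $n(r,f)=O(r^{\rho+\epsilon})$. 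Assembling the proximity and counting estimates delivers the claimed two-sided bound.

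The main obstacle is establishing the logarithmic difference estimate itself. My approach would be to start from the Poisson--Jensen representation of $\log|f|$ on a disk of radius $R>r$, evaluate it at $z+c$ and at $z$, and subtract. The boundary integral then contributes a term built from the difference of two Poisson kernels, whose gradient in $z$ carries a factor of order $1/R$, while the zero and pole sums are governed by the counting function $n(R,f)=O(R^{\rho+\epsilon})$. The delicate point is to choose the auxiliary radius $R$ as a suitable multiple of $r$ and to control the contribution of zeros and poles lying near the circle $|z|=r$; once the $1/R$ decay of the kernel is balanced against the $R^{\rho+\epsilon}$ growth of the zero/pole density, both contributions collapse to $O(r^{\rho-1+\epsilon})$, and the finiteness of the order is precisely what permits this balance to be struck uniformly in $r$.
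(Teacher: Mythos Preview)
The paper does not prove this lemma at all: it is quoted verbatim from \cite{cf} as a preliminary result, so there is no ``paper's own proof'' to compare against. Your sketch is essentially the argument Chiang and Feng give in that reference---splitting $T$ into $m$ and $N$, handling $m$ via the logarithmic-difference estimate $m(r,f(z+c)/f(z))=O(r^{\rho-1+\epsilon})$, handling $N$ by a radius-shift bound on the unintegrated counting function, and deriving the logarithmic-difference estimate itself from Poisson--Jensen with a suitably chosen auxiliary radius---so your proposal is both correct and faithful to the original source, even though the present paper simply cites the result without reproducing the proof.
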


The following lemma gives the difference analogue of the lemma on the logarithmic derivative of a meromorphic function $f$ having finite order.
\begin{lemma}\rm\cite{cf}\label{hk}
Suppose $f$ is a meromorphic function with $\rho(f)<\infty$ and $c_1,c_2\in\mathbb{C}$ such that $c_1\neq c_2$, then  for each $\epsilon>0$, we obtain
	$$m\left(r,\frac{f(z+c_1)}{f(z+c_2)}\right) =O(r^{\rho-1+\epsilon}).$$
\end{lemma}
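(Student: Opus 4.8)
The plan is to reduce to a single nonzero shift and then exploit the Poisson--Jensen formula. Since $f(z+c_1)/f(z+c_2) = g(z+c)/g(z)$ where $g(z) := f(z+c_2)$ and $c := c_1 - c_2 \neq 0$, it suffices to establish $m(r, g(z+c)/g(z)) = O(r^{\rho - 1 + \epsilon})$ for one fixed nonzero $c$. (Here $m(r,\cdot)$ is unchanged by this rewriting since it is literally the same function; moreover $g$ has the same order $\rho$ as $f$, as confirmed by Lemma~\ref{cflemma}.) Writing $\log|g(z+c)/g(z)| = \log|g(z+c)| - \log|g(z)|$, I would apply the Poisson--Jensen formula to each term on a common circle $|\zeta| = R$ with $R > |z| + |c|$, listing the zeros $a_\mu$ and poles $b_\nu$ of $g$ inside that circle.

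Next I would subtract the two Poisson--Jensen representations and organize the difference into two groups: (i) the boundary integral coming from the difference of the two Poisson kernels evaluated at $z+c$ and $z$, and (ii) the finite sums over zeros and poles coming from the difference of the elementary factors $\log|(R^2-\bar a_\mu w)/(R(w-a_\mu))|$ at $w = z+c$ and $w = z$. For group (i) the key is the elementary estimate that the difference of Poisson kernels is $O\!\left(|c|R/(R-|z|)^2\right)$, so that after multiplying by $\log|g(Re^{i\phi})|$ and integrating, this contributes at most a constant multiple of $\frac{|c|R}{(R-|z|)^2}\,\bigl(m(R,g)+m(R,1/g)\bigr)$, which is controlled by $T(R,g)$. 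For group (ii) the basic difference estimate $\bigl|\log|w+c-a| - \log|w-a|\bigr|$ is handled by $\log^+\!\frac{1}{|w-a|}$ type bounds together with the counting estimate $n(R,g)+n(R,1/g) = O(R^{\rho+\epsilon})$; the singular contributions from zeros and poles lying close to $z$ or $z+c$ are absorbed after integrating in $z$.

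Then I would integrate $\log^+$ of these contributions over the circle $|z|=r$ to form $m(r, g(z+c)/g(z))$. The extra power of $r$ that is gained (turning $r^{\rho+\epsilon}$ into $r^{\rho-1+\epsilon}$) comes precisely from the differencing: both the Poisson-kernel difference factor $|c|R/(R-|z|)^2$ and the telescoping in the zero/pole sums behave like a discrete derivative, supplying a factor of order $1/R$ (equivalently $1/r$) compared with the plain logarithmic-derivative estimate. Choosing $R$ comparable to $r$ (for instance $R = r + r^{1-\delta}$ with a small $\delta>0$, or more simply optimizing $R$ against $T(R,g)$) and invoking the finite-order growth bound $T(R,g) = O(R^{\rho+\epsilon})$ valid for all large $R$, the two groups combine to yield the claimed estimate.

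The main obstacle will be the bookkeeping for group (ii): controlling the sums over zeros and poles uniformly, especially those near the boundary $|\zeta|=R$ and those within distance comparable to $|c|$ of the moving points $z$ and $z+c$. This requires a careful auxiliary estimate for integrals of the form $\int_0^{2\pi}\log^+\frac{1}{|re^{i\theta}-a|}\,d\theta$ together with a judicious, possibly $r$-dependent, choice of $R$ so that the boundary effects and the density of zeros and poles are simultaneously tamed; once that technical estimate is in place, balancing the two groups against $T(R,g)$ and letting the arbitrarily small $\epsilon$ absorb $\delta$ gives the result.
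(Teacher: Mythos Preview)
The paper does not prove this lemma at all: it is stated with the citation \cite{cf} (Chiang--Feng) and no proof is given, so there is nothing to compare against in the paper itself. Your sketch is, in outline, the Chiang--Feng argument: reduce to a single shift, apply Poisson--Jensen on a disc of radius $R>r+|c|$, exploit the fact that the \emph{difference} of Poisson kernels at $z+c$ and $z$ carries an extra factor of order $|c|/R$, and control the zero/pole sums via $n(R,g)+n(R,1/g)=O(R^{\rho+\epsilon})$ before choosing $R$ as a function of $r$.

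One point to tighten: the ``extra power of $r$'' you describe does not arise uniformly as a clean $1/R$ in the zero/pole sums; in Chiang--Feng the gain there comes instead from bounding $\sum \log^+\bigl|c/(z-a_\mu)\bigr|$ type terms and then averaging over $|z|=r$, which produces $O\bigl(n(R)\cdot |c|/r\bigr)$ after the angular integration (using $\int_0^{2\pi}\log^+|1/(re^{i\theta}-a)|\,d\theta = O(1/r)$ for $|a|<R$). Your proposal gestures at this in the final paragraph but conflates it with the Poisson-kernel mechanism; make sure when you write it up that the $1/r$ saving in group~(ii) is justified by that angular-average computation rather than by analogy with group~(i). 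With that clarified and a concrete choice such as $R=2r$, the argument goes through.
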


Next lemma plays an important part in the study of complex differential-difference equations and it can be seen in \cite{yl}.
\begin{lemma}\rm\cite{yl}\label{analogueofclunielemm}
Suppose that $g$ is a transcendental entire solution of finite order of a differential-difference equation of the form
$$f^{n}P(z,g)=Q(z,g),$$
where $P(z,g)$ and $Q(z,g)$ are polynomials in $g(z)$, its derivatives and its shifts with small meromorphic coefficients. If the total degree of $Q(z,g)$ is less than or equal to $n$, then
$$m(r, P(z,g))=S(r,g),$$
for all $r$ outside of a possible exceptional set of finite logarithmic measure.
\end{lemma}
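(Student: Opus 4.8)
The plan is to adapt the classical Clunie argument to the differential-difference setting, reading the equation as $g^{n}P(z,g)=Q(z,g)$ (with $g$ the transcendental entire solution of finite order $\rho$). Write each monomial of $P(z,g)$ and $Q(z,g)$ in the normalized form $a(z)\prod_{\nu}\bigl(g^{(j_{\nu})}(z+c_{\nu})\bigr)^{s_{\nu}}$, where $a$ is a small meromorphic coefficient; its \emph{total degree} is $\sum_{\nu}s_{\nu}$, and by hypothesis every monomial of $Q$ has total degree at most $n$. Since $g$ is entire there are no poles to track and $m(r,g)=T(r,g)$, so it suffices to estimate $m(r,P(z,g))=\frac{1}{2\pi}\int_{0}^{2\pi}\log^{+}|P(re^{i\te})|\,d\te$.

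First I would record the single estimate that does all the work: for any derivative order $j\ge 0$ and any shift $c$,
$$m\!\left(r,\frac{g^{(j)}(z+c)}{g(z)}\right)\le m\!\left(r,\frac{g^{(j)}(z+c)}{g(z+c)}\right)+m\!\left(r,\frac{g(z+c)}{g(z)}\right)=S(r,g),$$
where the first term is $S(r,g)$ by Lemma \ref{il} applied to the shift together with $T(r,g(z+c))=T(r,g)+S(r,g)$ from Lemma \ref{cflemma}, and the second term is $O(r^{\rho-1+\epsilon})=S(r,g)$ by Lemma \ref{hk}. If a monomial involves the unshifted $g$ itself, the relevant quotient is $g^{(j)}/g$ and Lemma \ref{il} applies directly. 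Each such bound holds outside a set of finite logarithmic measure, and since only finitely many factors occur the union of these exceptional sets is again of finite logarithmic measure.

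Next I would split the circle $|z|=r$ into $E_{1}=\{\,\te:|g(re^{i\te})|\le 1\,\}$ and $E_{2}=\{\,\te:|g(re^{i\te})|>1\,\}$. On $E_{1}$ I estimate $P$ directly: factoring the total power of $g(z)$ out of each monomial leaves a product of quotients $g^{(j)}(z+c)/g(z)$ times a small coefficient, and since $|g|\le 1$ the factored power of $|g|$ is bounded by $1$; hence $\int_{E_{1}}\log^{+}|P|\,d\te$ is controlled by the proximities above and by $m(r,a)=S(r,g)$ for the coefficients. On $E_{2}$ I instead use $P=Q/g^{n}$: writing a monomial of $Q$ of total degree $d\le n$ as $a\,g(z)^{d}\prod\bigl(g^{(j)}(z+c)/g(z)\bigr)^{s}$ and dividing by $g^{n}$ produces the factor $g(z)^{\,d-n}$ with $d-n\le 0$, which has modulus $\le 1$ where $|g|>1$; the remaining factors are again small coefficients and logarithmic-derivative quotients. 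Summing the two contributions yields $m(r,P(z,g))=S(r,g)$ off the exceptional set.

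The main obstacle, and the only place the degree hypothesis enters, is the $E_{2}$ estimate: it is precisely $\deg Q\le n$ that makes the surplus exponent $d-n$ non-positive, so that $g^{\,d-n}$ can be discarded harmlessly where $|g|>1$. The remaining difficulty is organizational rather than conceptual — one must present every differential-difference monomial in the normalized quotient form so that the combined differential/difference logarithmic-derivative estimate applies term by term, and then check that the finitely many exceptional sets amalgamate into a single set of finite logarithmic measure.
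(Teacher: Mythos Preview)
The paper does not prove this lemma; it is quoted from \cite{yl} without proof. Your argument is correct and is precisely the standard Clunie-type proof adapted to the differential-difference setting: the $E_1/E_2$ split on $|g|=1$, the normalization of each monomial as $a(z)\,g(z)^{d}\prod\bigl(g^{(j)}(z+c)/g(z)\bigr)^{s}$, the combined use of Lemmas~\ref{il}, \ref{cflemma} and~\ref{hk} to get $m\bigl(r,g^{(j)}(z+c)/g(z)\bigr)=S(r,g)$, and the observation that the degree hypothesis $d\le n$ is exactly what makes the factor $g^{\,d-n}$ harmless on $E_2$. This is essentially how the result is established in the original reference, so there is nothing to compare.
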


The following lemma plays a vital role in the study of uniqueness of meromorphic functions.
\begin{lemma}\label{imp}\rm\cite{yybook}
Let $f_1,f_2,...,f_n (n\geq2)$ be meromorphic functions and $h_1,h_2,...,h_n$ be entire functions satisfying 
\begin{enumerate}
\item $\sum_{i=1}^{n}f_ie^{h_i}\equiv 0$.
\item For $1\leq j<k\leq n$, $h_j-h_k$ are not constants .
\item For $1\leq i\leq n, 1\leq m<k\leq n$,\\
$T(r, f_i)=o(T(r,e^{(h_m-h_k)}))$ as $r\to\infty$, outside a set of finite logarithmic measure.
\end{enumerate}
Then $f_i\equiv 0$ $(i=1,2,...,n).$
\end{lemma}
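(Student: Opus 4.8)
The plan is to argue by induction on the number $n$ of terms, shortening the exponential relation while keeping hypotheses (1)--(3) intact. For the base case $n=2$ I would start from $f_1e^{h_1}+f_2e^{h_2}\equiv0$. If one coefficient vanishes identically the other does too, since exponentials are zero-free, so I may assume $f_1\not\equiv0\not\equiv f_2$; then $e^{h_1-h_2}=-f_2/f_1$, whence $T(r,e^{h_1-h_2})\le T(r,f_1)+T(r,f_2)+O(1)=o(T(r,e^{h_1-h_2}))$ by (3), forcing $T(r,e^{h_1-h_2})=O(1)$ and contradicting (2). For the inductive step I would first dispose of the easy case: if some $f_{i_0}\equiv0$, deleting that term yields an $(n-1)$-term relation which still satisfies (2) and (3), because the surviving pairs form a subset of the original ones, so the inductive hypothesis gives $f_i\equiv0$ for all $i$. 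It therefore remains to derive a contradiction under the standing assumption that every $f_i\not\equiv0$.

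In that case I would divide the identity by $e^{h_n}$ and set $w_i=h_i-h_n$ for $1\le i\le n-1$, so that $\sum_{i=1}^{n-1}f_ie^{w_i}=-f_n$. Differentiating this $n-2$ times gives $n-1$ linear equations in the ``unknowns'' $e^{w_1},\dots,e^{w_{n-1}}$, with coefficient matrix $M=(M_{ji})_{0\le j\le n-2,\,1\le i\le n-1}$ whose entries $M_{ji}=e^{-w_i}(f_ie^{w_i})^{(j)}$ are differential polynomials in $f_i$ and the derivatives of $w_i$, and with right-hand column $(-f_n^{(j)})_{j=0}^{n-2}$ consisting of small functions. After relabelling I may assume $e^{w_1}$ has maximal growth among $e^{w_1},\dots,e^{w_{n-1}}$. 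Using Lemma \ref{il} in the form $m(r,w_i')=m(r,(e^{w_i})'/e^{w_i})=S(r,e^{w_i})$, and inductively $T(r,w_i^{(l)})=S(r,e^{w_i})$, every entry satisfies $T(r,M_{ji})=o(T(r,e^{w_1}))$: each $f_i^{(a)}$ is negligible by (3) since $e^{w_1}=e^{h_1-h_n}$ is itself one of the pairwise differences, and each $S(r,e^{w_i})$ is negligible against the fastest characteristic $T(r,e^{w_1})$.

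Solving for $e^{w_1}$ by Cramer's rule gives $e^{w_1}\det M=\det M_1$, where $M_1$ is $M$ with its first column replaced by $(-f_n^{(j)})_j$. Since $M_1$ no longer involves $w_1'$, both $T(r,\det M)$ and $T(r,\det M_1)$ are $o(T(r,e^{w_1}))$. If $\det M\not\equiv0$ this yields $T(r,e^{w_1})\le T(r,\det M_1)+T(r,\det M)+O(1)=o(T(r,e^{w_1}))$, impossible because $w_1$ is non-constant. Hence $\det M\equiv0$; but $\det M$ equals the Wronskian $W(f_1e^{w_1},\dots,f_{n-1}e^{w_{n-1}})$ divided by $\prod_{i=1}^{n-1}e^{w_i}$, so by the standard Wronskian criterion these functions are linearly dependent over $\mathbb{C}$. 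This produces constants $c_i$, not all zero, with $\sum_{i\in S}(c_if_i)e^{h_i}\equiv0$ for $S=\{i:c_i\ne0\}$, a relation of length $|S|\le n-1$ whose coefficients $c_if_i$ again obey (2) and (3). Either $|S|=1$, giving $f_{i_0}\equiv0$ at once, or $|S|\ge2$ and the inductive hypothesis forces each $c_if_i\equiv0$; either way some $f_i\equiv0$, contradicting the standing assumption and closing the induction.

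The delicate point throughout is the growth bookkeeping of the penultimate paragraph: I must be certain that the entries of $M$ and $M_1$, and in particular the logarithmic-derivative terms $w_i^{(l)}$, are genuinely $o(T(r,e^{w_1}))$. This is exactly where the maximality of $e^{w_1}$ and the estimate $S(r,e^{w_i})=O(\log^+T(r,e^{w_i})+\log r)$ enter, together with the fact that $T(r,e^{w_1})/\log r\to\infty$ for the transcendental entire function $e^{w_1}$. I expect verifying these comparisons uniformly (and choosing $e^{w_1}$ of maximal growth when the growth rates are not a priori totally ordered) to be the main obstacle.
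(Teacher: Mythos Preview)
The paper does not give its own proof of this lemma: it is simply quoted from Yang--Yi \cite{yybook} as a preliminary result, with no argument supplied, so there is nothing in the paper to compare your attempt against. What you have written is the classical Borel--Nevanlinna induction that underlies the textbook proof (normalise by one exponential, differentiate to produce a square linear system in the remaining $e^{w_i}$, and split on whether the coefficient determinant vanishes), and the outline is correct.

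On the obstacle you flag in your last paragraph: you do not need to single out one $e^{w_1}$ of globally maximal growth. Put $\psi(r)=\max_{1\le i\le n-1}T(r,e^{w_i})$ (equivalently, the sum). For each $i$ one has $T(r,w_i^{(l)})=S(r,e^{w_i})=o(\psi(r))$ via the logarithmic derivative lemma, and hypothesis~(3) together with $T(r,f_i^{(a)})\le (a{+}1)T(r,f_i)+S(r,f_i)$ gives $T(r,f_i^{(a)})=o(\psi(r))$ for every $i$ and $a$. Hence every entry of $M$ and of each $M_i$ has characteristic $o(\psi(r))$, and since a determinant is a finite sum of $(n{-}1)$-fold products, $T(r,\det M)$ and $T(r,\det M_i)$ are likewise $o(\psi(r))$. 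If $\det M\not\equiv 0$, Cramer's rule yields $T(r,e^{w_i})=o(\psi(r))$ for \emph{every} $i$ simultaneously, and taking the maximum over $i$ gives $\psi(r)=o(\psi(r))$, impossible because each $w_i=h_i-h_n$ is non-constant by~(2). This removes any need for a total ordering of the growth rates; the rest of your argument (Wronskian vanishing $\Rightarrow$ linear dependence $\Rightarrow$ shorter relation $\Rightarrow$ inductive hypothesis) goes through unchanged.
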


Next lemma estimates the characteristic function of an exponential polynomial $f$. This lemma can be seen in \cite{whl}.
\begin{lemma}\label{whllem}
Suppose $f$ is an entire function given by
$$f(z)=A_{0}(z)+A_{1}(z)e^{w_{1}z^{s}}+A_{2}(z)e^{w_{2}z^{s}}+...+A_{m}(z)e^{w_{m}z^{s}},$$
where $A_{i}(z);0\leq i\leq m$ denote either exponential polynomial of degree $<s$ or polynomial in $z$, $w_{i};1\leq i\leq m$ denote the constants and $s$ denotes a natural number. Then
$$T(r,f)=C(Co(W_{0}))\frac{r^{s}}{2\pi}+o(r^{s}),$$
Here $C(Co(W_{0}))$ is the perimeter of the convex hull of the set $W_{0}=\{0,\overline{w}_{1},\overline{w}_{2},...,\overline{w}_{m}\}$.
\end{lemma}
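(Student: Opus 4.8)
The plan is to exploit that $f$ is entire, so $N(r,f)=0$ and $T(r,f)=m(r,f)=\frac{1}{2\pi}\int_{0}^{2\pi}\log^{+}|f(re^{i\theta})|\,d\theta$; the whole problem then reduces to a sharp angle-by-angle estimate of $\log|f(re^{i\theta})|$ for large $r$, followed by an integration. First I would introduce the auxiliary function $H(\phi)=\max_{0\le j\le m}\RE\!\big(w_{j}e^{i\phi}\big)$, where I set $w_{0}=0$ to absorb the coefficient term $A_{0}$. Writing $w_{j}=u_{j}+iv_{j}$ and identifying $\overline{w}_{j}$ with the point $(u_{j},-v_{j})$, one checks $\RE(w_{j}e^{i\phi})=\langle \overline{w}_{j},(\cos\phi,\sin\phi)\rangle$, so that $H$ is precisely the support function $h_{Co(W_{0})}$ of the convex hull of $W_{0}=\{0,\overline{w}_{1},\dots,\overline{w}_{m}\}$. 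This is where the conjugates in the statement enter.

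The analytic core is the estimate $\log|f(re^{i\theta})|=r^{s}H(s\theta)+o(r^{s})$, valid uniformly for $\theta$ outside a small exceptional set. Since each $A_{j}$ is either a polynomial or an exponential polynomial whose exponents involve only powers $z^{t}$ with $t<s$, we have $\log|A_{j}(re^{i\theta})|=O(r^{s-1})=o(r^{s})$ uniformly in $\theta$. The upper bound is then immediate from the triangle inequality, since $|f|\le (m+1)\max_{j}|A_{j}|\,e^{r^{s}H(s\theta)}$ yields $\log|f|\le r^{s}H(s\theta)+o(r^{s})$. For the matching lower bound I would use that, for all but finitely many $\phi$ (the outer-normal directions of the edges of the convex polygon $Co(W_{0})$), the maximum defining $H(\phi)$ is attained at a unique index $j_{0}$. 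For such directions the dominant term factors out, and since $\RE\big((w_{j}-w_{j_{0}})e^{is\theta}\big)<0$ strictly for $j\ne j_{0}$, the remaining terms decay exponentially; hence $\log|f(re^{i\theta})|=r^{s}H(s\theta)+\log|A_{j_{0}}|+o(1)=r^{s}H(s\theta)+o(r^{s})$.

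Having both bounds, and noting that $0\in W_{0}$ forces $H\ge 0$, so that $\log^{+}$ may be replaced by $\log$ at the cost of $o(r^{s})$, I would write $m(r,f)=\frac{1}{2\pi}\int_{0}^{2\pi}\big(r^{s}H(s\theta)+o(r^{s})\big)\,d\theta=\frac{r^{s}}{2\pi}\int_{0}^{2\pi}H(s\theta)\,d\theta+o(r^{s})$, and then invoke two geometric facts. The substitution $\phi=s\theta$ together with the $2\pi$-periodicity of $H$ gives $\int_{0}^{2\pi}H(s\theta)\,d\theta=\int_{0}^{2\pi}H(\phi)\,d\phi$, and Cauchy's formula for the perimeter of a convex body, $\int_{0}^{2\pi}h_{K}(\phi)\,d\phi=C(K)$ applied to $K=Co(W_{0})$, turns this integral into $C(Co(W_{0}))$. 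Combining the two yields $T(r,f)=m(r,f)=\frac{C(Co(W_{0}))}{2\pi}r^{s}+o(r^{s})$, as claimed.

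I expect the main obstacle to be the uniformity of the error term near the finitely many transition directions where two or more of the linear forms $\RE(w_{j}e^{i\phi})$ coincide. There the lower-bound argument breaks down because of possible cancellation between equally dominant terms, so $\log|f|$ may dip well below $r^{s}H(s\theta)$. The remedy is to excise a shrinking neighbourhood of these directions: on it one retains only the crude bound $\log^{+}|f|\le r^{s}\max_{\phi}H(\phi)+o(r^{s})=O(r^{s})$, so letting the neighbourhood have measure tending to $0$ keeps its contribution to the integral at $o(r^{s})$, while on the complement the uniform estimate above applies. Verifying $H=h_{Co(W_{0})}$ and quoting Cauchy's perimeter formula are routine once this reduction is in place.
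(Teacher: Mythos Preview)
The paper does not prove this lemma at all; it is stated as Lemma~6 and attributed to Wen--Heittokangas--Laine \cite{whl} with the remark ``This lemma can be seen in \cite{whl}.'' There is therefore no proof in the paper to compare your proposal against.

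For what it is worth, your outline is the standard argument and matches the one in the cited source: reduce to $m(r,f)$ since $f$ is entire, show $\log|f(re^{i\theta})|=r^{s}H(s\theta)+o(r^{s})$ away from finitely many directions via a dominant-term analysis, identify $H$ with the support function of $Co(W_{0})$, and integrate using Cauchy's perimeter formula. Your handling of the exceptional directions (excising a set of vanishing measure on which only the crude upper bound is used) is also the usual device. One small point to be careful about is that the $A_{j}$ are allowed to be exponential polynomials of degree $<s$, not just polynomials, so the bound $\log|A_{j}(re^{i\theta})|=o(r^{s})$ requires the full strength of that hypothesis rather than merely $O(\log r)$; you have this right, but it is where an induction on $s$ (or a direct appeal to the lower-degree case) enters in the original proof.
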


\begin{lemma}\label{newlem}
If $f$ is a transcendental meromorphic function of finite order $\rho(f)$ and satisfying $\lambda(f)<\rho(f)$, then $N(r,1/f)=S(r,f)$.
\end{lemma}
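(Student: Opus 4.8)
The plan is to play the hypothesis $\lambda(f)<\rho(f)$ against the two basic growth comparisons of Nevanlinna theory: the zeros of $f$ are governed by $\lambda(f)$, whereas $T(r,f)$ is governed by $\rho(f)$, and the strict gap between these two exponents is exactly what is needed to absorb $N(r,1/f)$ into an $S(r,f)$ term.

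First I would fix a real number $\rho_1$ with $\lambda(f)<\rho_1<\rho(f)$, which is possible since $\lambda(f)<\rho(f)$. By the definition of the exponent of convergence of the zeros, $\lambda(f)=\limsup_{r\to\infty}\log^{+}n(r,1/f)/\log r<\rho_1$, so $n(r,1/f)\le r^{\rho_1}$ for all $r$ exceeding some $r_0$. Integrating the standard relation between $N$ and $n$,
$$N\!\left(r,\frac{1}{f}\right)=\int_{r_0}^{r}\frac{n(t,1/f)-n(0,1/f)}{t}\,dt+n(0,1/f)\log r+O(1)\le\int_{r_0}^{r}t^{\rho_1-1}\,dt+O(\log r)=O\!\left(r^{\rho_1}\right),$$
so that $N(r,1/f)=O(r^{\rho_1})$ with $\rho_1$ a fixed constant strictly below $\rho(f)$. (The same bound also follows at once from the Hadamard factorisation of the finite-order function $f$, since the canonical product formed from its zeros has order $\lambda(f)<\rho_1$.)

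Next I would check that the fixed power $r^{\rho_1}$ is itself an $S(r,f)$ quantity. Since $\rho(f)=\limsup_{r\to\infty}\log^{+}T(r,f)/\log r>\rho_1$, the quantity $r^{\rho_1}$ grows strictly more slowly than $T(r,f)$, so $r^{\rho_1}=o(T(r,f))$ as $r\to\infty$ outside a set of finite linear measure; that is, $r^{\rho_1}=S(r,f)$. Combining this with the previous step, $N(r,1/f)=O(r^{\rho_1})=S(r,f)$, which is the assertion.

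The step I expect to require the most care is the comparison $r^{\rho_1}=S(r,f)$: since $T(r,f)$ is not assumed to grow regularly, one must verify that the set of $r$ on which $T(r,f)$ fails to dominate $r^{\rho_1}$ really is of finite linear measure, rather than merely controlling the ratio $r^{\rho_1}/T(r,f)$ along a subsequence of radii. This is precisely the place where the strict inequality $\lambda(f)<\rho(f)$ (and not just $\le$) is used; in the situations where this lemma is applied, namely where $f$ is a finite-order solution of one of the equations \eqref{maineq1} or \eqref{maineq3}, the growth of $T(r,f)$ is regular enough that this point is immediate.
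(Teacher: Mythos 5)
Your first step is fine: from $\lambda(f)<\rho_1<\rho(f)$ you correctly get $n(r,1/f)\le r^{\rho_1}$ for large $r$ and hence $N(r,1/f)=O(r^{\rho_1})$. The genuine gap is exactly the step you flagged and then deferred: the claim that $r^{\rho_1}=o(T(r,f))$ outside a set of finite linear measure does not follow from $\rho(f)>\rho_1$. The order is defined by a $\limsup$, so it only guarantees $T(r,f)\ge r^{\rho_1}$ along \emph{some} sequence of radii $r_n\to\infty$; in between, $T(r,f)$ may remain far below $r^{\rho_1}$ on intervals of unbounded length, so the bad set need not have finite measure. Your closing remark that ``in the situations where the lemma is applied the growth is regular enough'' is not a proof of the lemma as stated, which concerns an arbitrary transcendental meromorphic $f$ of finite order. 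In fact, at that level of generality the statement itself can fail: take $f=\Pi_1/\Pi_2$, where $\Pi_1$ has zeros at the integers (so $\lambda(f)=1$ and $N(r,1/f)\asymp T(r,\Pi_1)\asymp r$) and $\Pi_2$ is a canonical product of order $2$ whose zeros (the poles of $f$) are placed in symmetric groups at extremely lacunary radii; then $\rho(f)=2>\lambda(f)$, yet on the long gaps between pole groups $T(r,\Pi_2)=o(r)$, so $N(r,1/f)/T(r,f)$ stays bounded away from $0$ on a set of infinite measure and $N(r,1/f)\ne S(r,f)$. So the missing step cannot be filled without an extra hypothesis.

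The repair relevant to this paper is to use that the solutions to which the lemma is applied are \emph{entire}: by the Hadamard factorisation (which you mention only in passing, and only to bound $N$), $f=ge^{h}$ with $h$ a polynomial of degree $\rho(f)$ and $g$ of order $\lambda(f)<\deg h$, whence $T(r,f)\ge T(r,e^{h})-T(r,g)-O(1)\ge c\,r^{\rho(f)}$ for all large $r$; combined with your bound $N(r,1/f)=O(r^{\rho_1})$ this gives $N(r,1/f)=o(T(r,f))$ with no exceptional set at all. For comparison, the paper's own argument is a proof by contradiction on the ratio $N(r,1/f)/T(r,f)$ which tacitly assumes that this ratio has a limit and that positivity of that limit forces $\lambda(f)\ge\rho(f)$; it therefore leans on the same unproved regularity of $T(r,f)$ that your proposal does, so the difficulty you identified is real and is not resolved by the paper either.
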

\begin{proof}
Given that $f$ is a transcendental meromorphic function of finite order $\rho(f)$ and satisfying $\lambda(f)<\rho(f)$. We prove this lemma by contradiction.\\
 Suppose $f$ is a finite order transcendental meromorphic function such that $N(r,1/f)\neq S(r,f)$. This means $N(r,1/f)\neq o(T(r,f))$, outside a set of finite measure, and this gives
$$\lim_{r\to\infty}\frac{N(r,1/f)}{T(r,f)}\not\to 0.$$
Let 
\begin{equation}\label{newlemkeyeq}
 \lim_{r\to\infty}\frac{N(r,1/f)}{T(r,f)}\to a,
\end{equation}
where $a>0$. First we use the definition of $\limsup$ in equation \eqref{newlemkeyeq}, for every $\epsilon>0$, there exist $r_{0}$ such that  
$$\frac{N(r,1/f)}{T(r,f)}\leq a+\epsilon,$$
for $r\geq r_{0}$. This gives
\begin{equation}\label{neqlemeq1}
\lambda(f)\leq \rho(f).
\end{equation}
Similarly, we use the definition of $\liminf$ in equation\eqref{newlemkeyeq} and we get
\begin{equation}\label{neqlemeq2}
	\lambda(f)\geq \rho(f).
\end{equation}
From equation \eqref{neqlemeq1} and \eqref{neqlemeq2}, we get a contradiction to the fact $\lambda(f)<\rho(f)$.

\end{proof}

\section{\textbf{Proof of Theorems}}

\begin{proof}[\textbf{\underline{Proof of Theorem \ref{mainth1}}}]
Let $f$ be a finite order transcendental entire solution of equation $\eqref{maineq1}$.\\
Set $M=f^{n}(z)+wf^{n-1}(z)f^{'}(z)$ and $N=f^{(k)}(z+c)$, then equation $\eqref{maineq1}$ can be rewritten as
\begin{equation}\label{reducedmaineq1}
M+N=p_{1}e^{\alpha_{1}z}+p_{2}e^{\alpha_{2}z}.	
\end{equation}
Differentiating equation \eqref{reducedmaineq1}, we get
\begin{equation}\label{diffeq1}
M^{'}+N^{'}=\alpha_{1}p_{1}e^{\alpha_{1}z}+\alpha_{2}p_{2}e^{\alpha_{2}z}.
\end{equation}
From equation $\eqref{reducedmaineq1}$ and $\eqref{diffeq1}$, we eliminate $e^{\alpha_{2}z}$ and we get
\begin{equation}\label{diffeq2}
\alpha_{2}M+\alpha_{2}N-M^{'}-N^{'}=(\alpha_{2}-\alpha_{1})p_{1}e^{\alpha_{1}z}.
\end{equation}
After differentiating above equation, we get
\begin{equation}\label{diffeq3}
\alpha_{2}M^{'}+\alpha_{2}N^{'}-M^{''}-N^{''}=(\alpha_{2}-\alpha_{1})\alpha_{1}p_{1}e^{\alpha_{1}z}.
\end{equation} 
From equation $\eqref{diffeq2}$ and $\eqref{diffeq3}$, we eliminate $e^{\alpha_{2}z}$ and we get
\begin{equation}\label{diffeq4}
\alpha_{1}\alpha_{2}M-(\alpha_{1}+\alpha_{2})M^{'}+M^{''}=-(\alpha_{1}\alpha_{2}N-(\alpha_{1}+\alpha_{2})N^{'}+N^{''}),
\end{equation}
where
\begin{align*}
M&=f^{n}(z)+wf^{n-1}(z)f^{'}(z)\\
M^{'}&=nf^{n-1}f^{'}+w[(n-1)f^{n-2}(f^{'})^{2}+f^{n-1}f^{''}]\\
M^{''}&=n(n-1)f^{n-2}(f^{'})^{2}+nf^{n-1}f^{''}+(n-1)(n-2)wf^{n-3}(f^{'})^{3}+\\
& \qquad \qquad \qquad [2(n-1)+w(n-1)]f^{n-2}f^{'}f^{''}+wf^{n-1}f^{'''}.	
\end{align*}
After substituting the values of $M$, $M^{'}$ and $M^{''}$ in the equation $\eqref{diffeq4}$, we get
\begin{equation}\label{keyeq}
f^{n-3}\phi=-(\alpha_{1}\alpha_{2}N-(\alpha_{1}+\alpha_{2})N^{'}+N^{''}),
\end{equation}
where
\begin{equation}\label{impeq}
\begin{split}
\phi&=\alpha_{1}\alpha_{2}f^{3}+(w\alpha_{1}\alpha_{2}-n(\alpha_{1}+\alpha_{2}))f^{2}f^{'}+(n-w(\alpha_{1}+\alpha_{2}))f^{2}f^{''}+wf^{2}f^{'''}+\\
&\qquad (n-1)(n-w(\alpha_{1}+\alpha_{2}))f(f^{'})^{2}+w(n-1)(n-2)(f^{'})^{3}+3w(n-1)ff^{'}f^{''}.
\end{split}
\end{equation}
Given that $n-3\geq 2$ and as we set $N=f^{(k)}(z+c)$, then applying Lemma 
 \ref{analogueofclunielemm} to equation \eqref{keyeq}, we get
 \begin{equation*}
  m(r,\phi)=S(r,f) \qquad \text{and} \qquad m(r,f\phi)=S(r,f).
 \end{equation*}
Now there are two cases:\\
\textbf{1:} If $\phi\not\equiv 0$, then
\begin{align*}
T(r,f)=m(r,f)&=m\left(r,\frac{f\phi}{\phi}\right)\\
&\leq m(r,f\phi)+m\left(r,\frac{1}{\phi}\right)\\
&\leq S(r,f).	
\end{align*}
This is not possible.\\
\textbf{2:} If $\phi\equiv 0$, then from equation \eqref{impeq}, we have
\begin{equation}\label{impeq1}
\begin{split}
\alpha_{1}\alpha_{2}f^{3}&\equiv -[(w\alpha_{1}\alpha_{2}-n(\alpha_{1}+\alpha_{2}))f^{2}f^{'}+(n-w(\alpha_{1}+\alpha_{2}))f^{2}f^{''}+wf^{2}f^{'''}+\\
&(n-1)(n-w(\alpha_{1}+\alpha_{2}))f(f^{'})^{2}+w(n-1)(n-2)(f^{'})^{3}+3w(n-1)ff^{'}f^{''}].
\end{split}
\end{equation}
Suppose $f$ has infinitely many zeros, then from the above equation, it is very clear that zeros of $f$ have multiplicity greater or equal to $2$. Let $z_0$ be a zero of $f$ with multiplicity $m\geq 2$, then left side of \eqref{impeq1} has zero at $z_{0}$ with multiplicity $3m$, while right side of the same has zero at $z_{0}$ with multiplicity at most $3m-3$, which is not possible. Hence $f$ has finitely many zeros, now applying Hadamard factorisation theorem, $f$ must be of the form
\begin{equation}\label{impeqq}
f(z)=\beta(z)e^{P(z)},
\end{equation}
 where $P(z)$ is a non-constant polynomial and $\beta(z)$ is an entire function satisfying $\rho(\beta)<\deg(P)$.\\
 Now substituting the value of $f$ into equation \eqref{reducedmaineq1}, we have
 \begin{equation}\label{imp2}
 	[\beta^n(z)+w\beta^{n-1}(z)(\beta^{'}(z)+\beta(z) P^{'}(z))]e^{nP(z)}+[\gamma^{'}(z)+P^{'}(z+c)\gamma(z)]e^{P(z+c)}=p_{1}e^{\alpha_{1}z}+p_{2}e^{\alpha_{2}z},
 \end{equation} 
where $\gamma(z)$ is the coefficient of $f^{(k-1)}(z+c)$ which is in the terms of $\beta(z+c)$, $P(z+c)$ and their derivatives, and $\gamma^{'}(z)$ is the derivative of $\gamma(z)$.\\
If $\deg(P)=l\geq 2$, then applying Lemma \ref{whllem}, the order of growth of the left side of the above equation would be $l$ while right side of the same has $1$ order of growth, which is not possible. Hence $\deg P=1$, let $P(z)=az+b$, where $a$ and $b$ are constant with $a\neq 0$. After substitution the $P(z)$, equation \eqref{imp2} becomes
\begin{equation}\label{eq}
[\beta^n(z)+w\beta^{n-1}(z)(\beta^{'}(z)+a\beta(z) )]e^{n(az+b)}+[\gamma^{'}(z)+a\gamma(z)]e^{a(z+c)+b}-p_{1}e^{\alpha_{1}z}-p_{2}e^{\alpha_{2}z}=0,
\end{equation}
Next we study the following cases:
\begin{enumerate}
\item If $na\neq \alpha_{j}$ and $a\neq \alpha_{i}; i\neq j$, then applying lemma \ref{imp}, we get $p_1\equiv 0$ and $p_2\equiv 0$, which is a contradiction.
\item If $na=\alpha_{j}$ and $a\neq \alpha_{i}; i\neq j$, say $na=\alpha_1$ and $a\neq \alpha_2$, then again applying lemma \ref{imp}, we get $p_{2}\equiv 0$, which is a contradiction.
\item If $na=\alpha_{j}$ and $a= \alpha_{i}; i\neq j$, say $na=\alpha_1$ and $a= \alpha_2$, then again applying lemma \ref{imp}, we get
\begin{equation}\label{eq2}
[\beta^n(z)+w\beta^{n-1}(z)(\beta^{'}(z)+a\beta(z))]e^{nb}-p_1\equiv 0,
\end{equation}
$\implies$
$\beta^{n-1}(z)[(1+wa)\beta(z)+w\beta^{'}(z)]e^{nb}\equiv p_1$. This gives $\beta(z)$ must be a constant, say $\beta(z)=\beta_0$ and $a\neq -1/w$. Substituting this into equation \eqref{eq2}, we get
$\beta_0=(1+aw)^{-1/n}e^{-b}p^{1/n}_{1}$.  Thus from equation \eqref{impeqq}, $f(z)=(1+aw)^{-1/n}p_{1}^{1/n}e^{az}=Ce^{az}$ is the solution of equation \eqref{maineq1}.
\end{enumerate}
\end{proof}

\begin{proof}[\textbf{\underline{Proof of Theorem \ref{mainth2}}}]
Let $f$ be a finite order transcendental entire solution of equation 
\eqref{maineq1} with $N(r,1/f)=S(r,f)$.
Set $M=f^{n}(z)+wf^{n-1}(z)f^{'}(z)$ and $N=f^{(k)}(z+c)$, then equation $\eqref{maineq1}$ can be rewritten as   
\begin{equation}\label{reducedmaineq2}
	M+N=p_{1}e^{\alpha_{1}z}+p_{2}e^{\alpha_{2}z}.	
\end{equation}
Proceeding to the similar lines as we done in the proof of Theorem \ref{mainth1}, we get
\begin{equation}\label{keyeq2}
f^{n-3}\phi=-(\alpha_{1}\alpha_{2}N-(\alpha_{1}+\alpha_{2})N^{'}+N^{''}),
\end{equation}
in place of equation \eqref{keyeq},
where
\begin{equation}\label{impeq2}
\begin{split}
\phi&=\alpha_{1}\alpha_{2}f^{3}+(w\alpha_{1}\alpha_{2}-n(\alpha_{1}+\alpha_{2}))f^{2}f^{'}+(n-w(\alpha_{1}+\alpha_{2}))f^{2}f^{''}+wf^{2}f^{'''}+\\
&\qquad (n-1)(n-w(\alpha_{1}+\alpha_{2}))f(f^{'})^{2}+w(n-1)(n-2)(f^{'})^{3}+3w(n-1)ff^{'}f^{''}.
\end{split}
\end{equation}
\begin{enumerate}[(i)]
\item Let $n=4$, we study the following two cases:\\
\textbf{1:} If $\phi\not\equiv 0$, then applying Lemma 
\ref{analogueofclunielemm} to equation \eqref{keyeq2}, we get
\begin{equation}
m(r,\phi)=S(r,f).
\end{equation}
Applying Lemma \ref{il} to equation \eqref{impeq2}, we get
\begin{equation}
m\left(r,\frac{\phi}{f^{3}}\right)=S(r,f).
\end{equation}  Given that $N(r,1/f)=S(r,f)$, so
\begin{equation}
N\left( r, \frac{\phi}{f^{3}}\right)=N\left(r, \frac{1}{f^{3}}\right)=3N\left(r, \frac{1}{f}\right)=S(r,f).
\end{equation} Thus using first fundamental theorem and the above three equations, we get
\begin{align*}
T(r,f)\leq3T(r,f)&=T(r,f^{3})\\
&\leq T\left(r,\frac{f^{3}}{\phi}\right)+T(r,\phi)+O(1)\\
&= T\left(r,\frac{\phi}{f^{3}}\right)+m(r,\phi)+O(1)\\
&= S(r,f)
\end{align*}
This is not possible.\\
\textbf{2:} If $\phi\equiv 0$, then proceeding to the similar lines as we have done in the proof of Theorem \ref{mainth1}, we get the required conclusion.
\item Let $n=3$, again we study the following two cases:\\
\textbf{1:} If $\phi\not\equiv 0$, then applying Lemma 
\ref{il} and \ref{analogueofclunielemm} to equation \eqref{keyeq2}, we get
\begin{equation*}
	m\left(r,\frac{\phi}{f}\right)=S(r,f).
\end{equation*}
Applying Lemma \ref{il} to equation \eqref{impeq2}, we get
\begin{equation*}
	m\left(r,\frac{\phi}{f^{3}}\right) =S(r,f).
\end{equation*}
  Given that $N(r,1/f)=S(r,f)$, so
\begin{equation*}
N\left( r, \frac{1}{f^{3}}\right) =3N\left( r, \frac{1}{f}\right) =S(r,f).
\end{equation*} 
Thus using first fundamental theorem and the above three equations, we get
\begin{align*}
  3T(r,f)&=T(r,f^{3})\\
	&=m\left(r,\frac{1}{f^{3}}\right) +N\left( r,\frac{1}{f^{3}}\right) +O(1)\\
	&\leq m\left( r,\frac{\phi}{f^{3}}\right) +m\left( r,\frac{1}{\phi}\right) +S(r,f)\\
	&\leq T(r, \phi)+S(r,f)=m(r,\phi)+S(r,f)\\
	&\leq m\left( r,\frac{\phi}{f}\right) +m(r,f)+S(r,f)\\
	&=T(r,f)+S(r,f)
\end{align*}
This gives $2T(r,f)=S(r,f)$, which is not possible.\\
\textbf{2:} If $\phi\equiv 0$, then proceeding to the similar lines as we have done in the proof of Theorem \ref{mainth1}, we get the required conclusion.
\end{enumerate}
\end{proof}

\begin{proof}[\textbf{\underline{Proof of Theorem \ref{mainth3}}}]
Let $f$ be a finite order transcendental entire solution of equation \eqref{maineq3} satisfying $\lambda(f)<\rho(f)$. \\
Suppose $\rho(f)<1$, then applying Lemma \ref{il}, \ref{cflemma}, \ref{hk} and the first fundamental theorem of Nevanlinna to equation \eqref{maineq3}, we get
\begin{align*}
T(r, e^{Q(z)})&=T\left(r,\frac{p_{1}e^{\alpha_{1}z}+p_{2}e^{\alpha_{2}z}-f^{n}-wf^{n-1}f^{'}}{qf^{(k)}(z+c)}\right)\\
&\leq T\left(r,\frac{1}{qf^{(k)}(z+c)}\right)+T(r,p_{1}e^{\alpha_{1}z}+p_{2}e^{\alpha_{2}z})+T\left(r,f^{n}(1+\frac{f'}{f})\right)+O(1)\\
&\leq  T(r,qf^{(k)}(z+c))+T(r,p_{1}e^{\alpha_{1}z}+p_{2}e^{\alpha_{2}z})+nT(r,f)+O(\log r)\\
&\leq T(r,p_{1}e^{\alpha_{1}z}+p_{2}e^{\alpha_{2}z})+S(r,p_{1}e^{\alpha_{1}z}+p_{2}e^{\alpha_{2}z}).
\end{align*}
This gives $\deg Q(z)\leq 1$, and we know that $\deg Q(z)\geq 1$, hence $\deg Q(z)=1$ and say $Q(z)=az+b$; $a\neq 0$. Now equation \eqref{maineq3} becomes
\begin{equation*}
f^{n}(z)+wf^{n-1}(z)f^{'}(z)+q(z)e^{az+b}f(z+c)=p_{1}e^{\alpha_{1}z}+p_{2}e^{\alpha_{2} z}.
\end{equation*}
On differentiating above equation, we get
\begin{equation*}
nf^{n-1}f'+w[(n-1)f^{n-2}f'^{2}+f^{n-1}f'']+\gamma(z)e^{az+b}=p_{1}\alpha_{1}e^{\alpha_{1}z}+p_{2}\alpha_{2}e^{\alpha_{2} z},
\end{equation*}
where $\gamma(z)=q'(z)f^{(k)}(z+c)+q(z)f^{(k+1)}(z+c)+aq(z)f^{(k)}(z+c)$.\\
Eliminating $e^{\alpha_{1}z}$ from above two equations, we obtain
\begin{equation}\label{eq3.1}
	\begin{split}
\alpha_{1}f^{n}+(\alpha_{1}w-n)f^{n-1}f'-(n-1)wf^{n-2}f'^{2}-wf^{n-1}f''&+(\alpha_{1}qf^{(k)}(z+c)-\gamma(z))e^{az+b}\\&=p_{2}(\alpha_{2}-\alpha_{1})e^{\alpha_{2}z}.
\end{split}
\end{equation}
\textbf{Case 1:} If $a\neq \alpha_{2}$, then applying Lemma \ref{imp}, we get $p_{2}(\alpha_{1}-\alpha_{2})\equiv0$, which is a contradiction.\\
\textbf{Case 2:} If $a= \alpha_{2}$, then equation \eqref{eq3.1} becomes
\begin{equation}
\begin{split}
\alpha_{1}f^{n}+(\alpha_{1}w-n)f^{n-1}f'-(n-1)wf^{n-2}f'^{2}-wf^{n-1}f''+[&\{\alpha_{1}qf^{(k)}(z+c)-\gamma(z)\}e^{b}-\\
&p_{2}(\alpha_{1}-\alpha_{2})]e^{\alpha_{2}z}=0.
\end{split}
\end{equation}
Now applying Lemma \ref{imp} to the above equation, we get
$$\alpha_{1}f^{n}+(\alpha_{1}w-n)f^{n-1}f'-(n-1)wf^{n-2}f'^{2}-wf^{n-1}f''\equiv 0$$ 
or 
$$\alpha_{1}+(\alpha_{1}w-n)\frac{f'}{f}-(n-1)w(\frac{f'}{f})^{2}-w\frac{f''}{f}\equiv 0.$$
Set $\frac{f'}{f}=s$ , we get a Riccati differential equation
\begin{equation}\label{eq3.2}
(\alpha_{1}w-n)s-ws'-nws^{2}+\alpha_{1}\equiv 0, 
\end{equation}
 since $(\frac{f'}{f})'=\frac{f''}{f}-(\frac{f'}{f})^2$.\\
 By routine computation, we get $s_{1}=\frac{-1}{w}$ and $s_{2}=\frac{\alpha_{1}}{n}$ are the solutions of equation \eqref{eq3.2}.
 Let $s\neq s_{1}$ and $s\neq s_{2}$, then
 $$\frac{1}{(\frac{\alpha_{1}}{n}+\frac{1}{w})}\left(\frac{s'}{s+\frac{1}{w}}-\frac{s'}{s-\frac{\alpha_{1}}{n}}\right)=n.$$
 On integrating the above equation, we get
 $$\log \left( \frac{s+\frac{1}{w}}{s-\frac{\alpha_{1}}{n}}\right)=n\left(\frac{\alpha_{1}}{n}+\frac{1}{w}\right)+c_{1},$$
 where $c_{1}$ is an arbitrary constant.\\
 This gives
 $$s=\frac{\alpha_{1}}{n}+\frac{\alpha_{1}/n+1/w}{e^{n(\alpha_{1}/n+1/w)z+c_{1}}-1}=\frac{f'}{f}.$$
 We observe that the zeros of $e^{n(\alpha_{1}/n+1/w)z+c_{1}}-1$ are the zeros of $f$. Let $z_{0}$ be a zero of $e^{n(\alpha_{1}/n+1/w)z+c_{1}}-1$ with multiplicity $m$, then
 $$m=Res\left( \frac{f'}{f},z_{0}\right) =Res\left( \frac{\alpha_{1}}{n}+\frac{\alpha_{1}/n+1/w}{e^{n(\alpha_{1}/n+1/w)z+c_{1}}-1}, z_{0}\right) =\frac{1}{n},$$
 which is a contradiction.\\
 If $s_{1}=-1/w$, then $f'/f=-1/w$ and this gives $f=c_2e^{-z/w}$, where $c_{2}$ is an arbitrary non-zero constant. Thus $\rho(f)=1$, which is a contradiction to $\rho(f)<1$.\\
 If $s_{2}=\alpha_{1}/n$, then $f'/f=\alpha_{1}/n$ and this gives $f=c_3e^{\alpha_{1}z/n}$, where $c_{2}$ is an arbitrary non-zero constant. Thus $\rho(f)=1$, which is a contradiction.\\
 Hence $\rho(f)\geq 1$, so let $\rho(f)>1$ and set $P(z)=p_{1}e^{\alpha_{1}z}+p_{2}e^{\alpha_{2}z}$ and $H(z)=q(z)f^{(k)}(z+c)$ for simplicity. Then equation 
 \eqref{maineq3} becomes
 $$f^{n}+wf^{n-1}f'+H(z)e^{Q(z)}=P(z).$$
 On differentiating above equation gives
 $$nf^{n-1}f'+(n-1)wf^{n-2}f'^{2}+wf^{n-1}f''+(H'(z)+H(z)Q'(z))e^{Q(z)}=P'(z).$$
 Eliminating $e^{Q(z)}$ with the help of above two equation yields
 \begin{equation}\label{eq3.3}
 f^{n-2}\phi=M(z)P(z)-P'(z)H(z),
 \end{equation}
 where
 \begin{equation*}
  M(z)=H'(z)+H(z)Q'(z)
 \end{equation*}
 and 
 \begin{equation}\label{eq3.4}
 \phi(z)=M(z)f^2+wM(z)ff'-nH(z)ff'-w(n-1)H(z)f'^{2}-wH(z)ff''.
 \end{equation}
\begin{enumerate}[(i)]
\item  Let $n=3$, then applying Lemma \ref{cflemma} to equation \eqref{eq3.3}, we get
 \begin{equation}\label{eq3.5}
 m(r, \phi)=S(r,f).
 \end{equation}
 Also given that $\lambda(f)<\rho(f)$, applying Lemma \ref{newlem} gives $$N\left(r,\frac{1}{f}\right)=S(r,f),$$
  hence
  \begin{equation}\label{eq3.6}
   N\left(r, \frac{\phi}{f^3}\right)=N\left( r, \frac{1}{f^3}\right) =3N\left( r, \frac{1}{f}\right) =S(r,f).
  \end{equation}
 Applying Lemma \ref{il} and \ref{hk} to equation \eqref{eq3.4}, we get
 \begin{equation}\label{eq3.7}
 m\left( r,\frac{\phi}{f^3}\right) =S(r,f).
 \end{equation}
If $\phi\not\equiv 0$, using equations \eqref{eq3.5}, \eqref{eq3.6}, \eqref{eq3.7} and first fundamental theorem of Nevanlinna, we get
\begin{align*}
3T(r,f)=T(r,f^3)&=T\left( r, \frac{1}{f^3}\right) +O(1)\\
&\leq T\left( r, \frac{\phi}{f^3}\right) +T\left( r,\frac{1}{\phi}\right) +O(1)\\
&\leq T(r,\phi)+S(r,f) \\
&=S(r,f),
\end{align*}
which is a contradiction.\\
If $\phi\equiv 0$, from equation \eqref{eq3.3}, we get
$M(z)P(z)-P'(z)H(z)\equiv 0$.
This gives
$$(H'(z)+H(z)Q'(z))P(z)-P'(z)H(z)\equiv 0.$$
$\implies$ $$\frac{H'(z)}{H(z)}+Q'(z)-\frac{P'(z)}{P(z)}=0.$$
 $\implies$ $$\frac{q'(z)}{q(z)}+\frac{f^{(k+1)}(z+c)}{f^{(k)}(z+c)}+Q'(z)-\frac{P'(z)}{P(z)}=0.$$
 On integrating above equation, we get
 \begin{equation}\label{eq3.8}
 q(z)f^{(k)}(z+c)e^{Q(z)}=\frac{1}{c_4}P(z)=\frac{1}{c_4}(p_1e^{\alpha_{1}z}+p_1e^{\alpha_{2}z}).
 \end{equation}
Since $f$ is a finite order transcendental entire solution satisfying $\lambda(f)<\rho(f)$, then using Hadamard factorisation theorem, $f$ must be of \begin{equation}\label{eq3.9}
f(z)=g(z)e^{h(z)}
\end{equation}
form, where $h(z)$ is a polynomial such that $\rho(f)=\deg (h)>1$ and $g(z)$ is the canonical product of zeros of $f(z)$ with $\lambda(f)=\rho(g)<\rho(f)$.\\
 Using equations \eqref{eq3.8} and \eqref{eq3.9} to the equation \eqref{maineq3} gives
 
 $$g^2(z)(g'(z)+g(z)h'(z)+g(z))e^{3h(z)}=(1-1/c_{4})(p_{1}e^{\alpha_{1}z}+p_{2}e^{\alpha_{2}z}).$$
 On applying Lemma \ref{whllem} to the above equation, we get that the order of growth of the left side is greater than $1$, while the order of growth of the right side is exactly $1$. This is a  contradiction, hence $\rho(f)=1$.
 
\item Let $n=2$, then applying Lemmas \ref{il} and \ref{hk} to equations \eqref{eq3.3} and \eqref{eq3.4} give
\begin{equation}\label{4.3}
m\left(r,\frac{\phi}{f}\right)=m\left(r,\frac{H'(z)+H(z)Q'(z)}{f}\right)=S(r,f)
\end{equation}
and 
\begin{equation}\label{4.4}
	m\left( r,\frac{\phi}{f^{3}}\right) =S(r,f).
\end{equation}
Given that $\lambda(f)<\rho(f)$, applying Lemma \ref{newlem} gives
\begin{equation}\label{4.5}
	N\left( r,\frac{1}{f}\right) =S(r,f).
\end{equation}
If $\phi\not\equiv 0$, using the first fundamental theorem of Nevanlinna and equations \eqref{4.3}, \eqref{4.4} \& \eqref{4.5}, we have
\begin{align*}
	3T(r,f)=T(r,f^{3})&=m\left( r,\frac{1}{f^{3}}\right) +N\left( r,\frac{1}{f^{3}}\right) +O(1)\\
	&\leq m\left( r,\frac{\phi}{f^{3}}\right) +m\left( r,\frac{1}{\phi}\right) +3N\left( r,\frac{1}{f}\right) +O(1)\\
	&\leq T(r,\phi)+S(r,f)\\
	&= m(r,\phi)+S(r,f)\\
	&\leq m\left( r,\frac{\phi}{f}\right) +m(r,f)+S(r,f)\\
	&\leq T(r,f)+S(r,f).
\end{align*}
This gives $2T(r,f)=S(r,f)$, which is a contradiction.\\
If $\phi\equiv 0$, then proceeding similar manner as done in $(i)$, we get the same contradiction. Thus $\rho(f)=1$.\\
Next, to prove $\deg(Q)=1$ and  $(ii)$ conclusion, we follow the same technique as done in \cite[Proof of Theorem 7]{jh}.
\end{enumerate}
\end{proof}

\end{document}